\documentclass[11pt,reqno]{amsart}

\usepackage[dvips]{graphicx}
\usepackage{amssymb, latexsym, amsmath, amscd, array, amsthm, epsfig, pstricks}
\usepackage[all]{xy}
\usepackage{bbm}

\usepackage{tikz-cd}

\usepackage[pagebackref=true]{hyperref}

\usepackage{breakurl}   %this allows for url breaks in pdf

\usepackage{graphicx}

\textheight22.5cm \textwidth16cm \addtolength{\topmargin}{-20pt}
\evensidemargin0cm \oddsidemargin0cm

%\scriptscriptstyle

\newcommand{\R}{{\mathbb R}}
\newcommand{\hyp}{{\mathbb H}}

\newcommand{\HH}{\mathcal{H}}

\newcommand{\UR}{{\rm UR}}
\newcommand{\Isom}{{\rm Isom}}

\newcommand{\diam}{{\rm diam}}

\newcommand{\scal}{{\rm scal}}

\newcommand{\sys}{{\rm sys}}

\newcommand{\vol}{{\rm vol}}

\newcommand{\id}{{\rm id}}
\newcommand{\length}{{\rm length}}

\newcommand{\ie}{{\it i.e.}}
\newcommand{\eg}{{\it e.g.}}

\numberwithin{equation}{section}

\newtheorem{theorem}{Theorem}[section]
\newtheorem{proposition}[theorem]{Proposition}
\newtheorem{corollary}[theorem]{Corollary}
\newtheorem{lemma}[theorem]{Lemma}

\theoremstyle{definition}
\newtheorem{definition}[theorem]{Definition}
\newtheorem{example}[theorem]{Example}
\newtheorem{remark}[theorem]{Remark}

\newtheorem{conjecture}[theorem]{Conjecture}

\long\def\forget#1\forgotten{} 

\begin{document}

\title{Macroscopic scalar curvature and local collapsing}

%\title{Local collapsing and volume of balls in metric spaces}
%\title{Large volume balls in universal covers of hyperbolic-like manifolds}

\author[S.~Sabourau]{St\'ephane Sabourau}

\thanks{Partially supported by the ANR project Min-Max (ANR-19-CE40-0014).}

\address{\parbox{\linewidth}{LAMA, Univ Paris Est Creteil, Univ Gustave Eiffel, CNRS, F-94010, Cr\'eteil, France \\
CRM (UMI3457), CNRS, Montr\'eal, QC H3C 3J7, Canada}}

\email{stephane.sabourau@u-pec.fr}

\subjclass[2010]{Primary 53C23; Secondary 53C20, 51K10}

\keywords{Macroscopic scalar curvature, volume growth, Alexandrov/Urysohn width, collapsing, Margulis function, exponential growth.}

\begin{abstract}
Consider a closed Riemannian $n$-manifold~$M$ admitting a negatively curved Riemannian metric.
We show that for every Riemannian metric on~$M$ of sufficiently small volume, there is a point in the universal cover of~$M$ such that the volume of every ball of radius~$r \geq 1$ centered at this point is greater or equal to the volume of the ball of the same radius in the hyperbolic $n$-space.
We also give an interpretation of this result in terms of macroscopic scalar curvature.
This result, which holds more generally in the context of polyhedral length spaces, is related to a question of Guth.
Its proof relies on a generalization of recent progress in metric geometry about the Alexandrov/Urysohn width involving the volume of balls of radius in a certain range with collapsing at different scales.
\end{abstract}

\maketitle

%\tableofcontents

\section{Introduction}

The scalar curvature of a closed Riemannian $n$-manifold~$M$ describes how the volume of infinitesimal balls in~$M$ compares to the volume of infinitesimal balls in the Euclidean $n$-space.
More precisely, the volume expansion of a ball of radius~$r$ centered at $x \in M$ satisfies
\begin{equation} \label{eq:scal}
\vol(B(x,r)) = \omega_n \, r^n \left( 1 - \frac{\scal(M,x)}{6(n+2)} \, r^2 + O(r^3) \right)
\end{equation}
as $r$ goes to zero, where $\scal(M,x)$ is the scalar curvature of~$M$ at~$x$ and $\omega_n$ is the volume of a unit ball in the Euclidean $n$-space; see~\cite[Theorem~3.98]{GHL}.
Understanding the relationship between scalar curvature and the topology of~$M$ is a major problem in Riemannian geometry.

\medskip

In this article, we will be interested in macroscopic versions of the following conjecture attributed to Schoen (which follows from a conjecture of Schoen about the Yamabe invariant of hyperbolic manifolds); see~\cite{schoen} and~\cite{Guth11}.
This conjecture was also stated by Gromov~\cite[3.A]{gro86} with nonsharp constants.

\begin{conjecture}[Schoen] \label{conj}
Let $(M,{\rm hyp})$ be a closed hyperbolic $n$-manifold and let $g$ be another Riemannian metric on~$M$.
If $\scal(g,x) \geq \scal({\rm hyp})$ for every $x \in M$ then 
\[
\vol(M,g) \geq \vol(M,{\rm hyp}).
\]

Equivalently, using~\eqref{eq:scal}, if $\vol(M,g) < \vol(M,{\rm hyp})$ then there exists $x_0 \in M$ such that
\[
\vol_g(B(x_0,r)) > \vol_{\rm hyp}(B(r))
\]
for every $r>0$ small enough.
\end{conjecture}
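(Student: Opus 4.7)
The plan is to prove Conjecture~\ref{conj} by combining a Dirac operator higher-index argument with Besson--Courtois--Gallot (BCG) barycenter rigidity. After a volume normalization, I extract a qualitative scalar curvature bound via spinor analysis on the universal cover, then promote it to the sharp volume comparison through an entropy inequality.

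\textbf{Step 1 (normalization).} Rescale $g$ to $\tilde g := c^2 g$ with $c = (\vol(M,{\rm hyp})/\vol(M,g))^{1/n}$, so that $\vol(M,\tilde g) = \vol(M,{\rm hyp})$. The hypothesis $\scal(g) \geq \scal({\rm hyp})$ becomes $\scal(\tilde g) \geq c^{-2}\scal({\rm hyp})$, and the conjecture reduces to proving the sharp integrated bound $\int_M \scal(\tilde g)\, dv_{\tilde g} \leq \int_M \scal({\rm hyp})\, dv_{\rm hyp}$, with equality only when $\tilde g$ is isometric to ${\rm hyp}$.

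\textbf{Step 2 (qualitative spinor bound).} Pass to a finite spin cover of $M$ (available by Deligne--Sullivan for hyperbolic manifolds) and form on $(M,\tilde g)$ the Mishchenko--Fomenko Dirac operator $D$ twisted by the flat Hilbert module bundle associated with the regular representation of $\Gamma := \pi_1(M)$. Since $\Gamma$ is Gromov hyperbolic, the Baum--Connes conjecture (Mineyev--Yu, Lafforgue) yields a nonzero higher index in $K_n(C^*_r(\Gamma))$. Applying the Lichnerowicz--Weitzenb\"ock formula $D^2 = \nabla^*\nabla + \tfrac14 \scal(\tilde g)$, lifted to $\hyp^n$ and tested against a spinor representative $\psi$ of this class, forces $\min_M \scal(\tilde g) \leq \scal({\rm hyp}) = -n(n-1)$.

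\textbf{Step 3 (BCG sharpening).} Promote this pointwise bound to the integrated one via the BCG entropy inequality $h(\tilde g)^n \vol(M,\tilde g) \geq h({\rm hyp})^n \vol(M,{\rm hyp})$ with its rigidity statement characterizing the hyperbolic metric. Couple the volume entropy $h(\tilde g)$ to $\scal(\tilde g)$ through a Bochner identity on $\hyp^n$ weighted by the spinor density $|\psi|^2$ from Step~2: the Kato inequality $|\nabla\psi| \geq |\nabla|\psi||$ combined with Lichnerowicz gives, after duality with the BCG functional on maps of nonzero degree, precisely the sharp constant needed for the volume comparison, together with the equality case.

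\textbf{Main obstacle.} The decisive difficulty is the weighted coupling in Step~3: there is no naive pointwise inequality $h^2 \leq -\scal/(n-1)$ without Ricci control, so one cannot directly substitute the scalar bound into BCG. The plan hinges on the Mishchenko--Fomenko spinor $\psi$ on $\hyp^n$ behaving almost like a Killing spinor on large scales, reflecting the rigidity of the hyperbolic holonomy, so that the $L^2$ Bochner argument reproduces the BCG sharp constant rather than a weaker multiple of it. Establishing the required $C^*$-algebraic regularity and decay of $\psi$, which is where hyperbolicity of $\Gamma$ must enter beyond the mere nonvanishing of the higher index, is the principal technical hurdle.
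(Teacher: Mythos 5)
This statement is a conjecture, not a result the paper proves: the paper records Schoen's conjecture as background (known in dimension $2$ by Gauss--Bonnet and in dimension $3$ via Perelman's work) and then establishes only \emph{macroscopic} analogues of it (Theorem~\ref{theo:hyp}, Corollary~\ref{coro:ball}, Corollary~\ref{coro:final}). Your proposal does not close the conjecture, and its two substantive steps fail for concrete reasons. In Step~2, the Mishchenko--Fomenko/Rosenberg higher-index obstruction combined with the Lichnerowicz formula $D^2=\nabla^*\nabla+\tfrac14\scal$ can only show that $\scal(\tilde g)$ cannot be everywhere positive, i.e.\ $\min_M\scal(\tilde g)\leq 0$; it can never produce the sharp negative bound $\min_M\scal(\tilde g)\leq -n(n-1)$. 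The higher index is invariant under rescaling the metric, while any strictly negative scalar curvature threshold is not, so no index-theoretic argument alone can detect the hyperbolic value. Worse, even if Step~2 held, its conclusion is essentially vacuous here: after your normalization the hypothesis already gives the pointwise lower bound $\scal(\tilde g)\geq c^{-2}\scal({\rm hyp})$, and a bound on $\min_M\scal(\tilde g)$ from above adds no leverage toward a volume inequality.

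The decisive gap is the one you flag yourself in Step~3, and it is not a technical hurdle but the precise reason the conjecture is open: there is no inequality of the form $h(\tilde g)^2\leq -\scal(\tilde g)/(n-1)$ (or any weighted/integrated surrogate of it) that follows from scalar curvature alone, since entropy and volume comparisons of Bishop type require Ricci, not scalar, control. No Kato/Bochner manipulation of an $L^2$-spinor density on $\hyp^n$ is known to substitute for this, and asserting that the Mishchenko--Fomenko spinor is ``almost Killing'' is not supported by anything in the $C^*$-algebraic index machinery, which is insensitive to the fine Riemannian geometry you would need. By contrast, the paper sidesteps the conjecture entirely: it replaces infinitesimal scalar curvature by macroscopic scalar curvature (volumes of balls of radius $r\geq 1$ in the universal cover) and proves uniform volume lower bounds via a non-collapsing/width argument (Theorem~\ref{theo:0}), the Margulis function, and the $\kappa$-Tits alternative, which is a fundamentally different and currently feasible route.
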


This conjecture is true in dimension~$2$ by the Gauss-Bonnet formula and in dimension~$3$ from Perelman's work; see~\cite[Proposition ~93.10]{KL08}.
In higher dimension, it also holds true for Riemannian metrics close enough to the hyperbolic one, see~\cite[Corollaire~C]{BCG91}, or if one replaces scalar curvature with Ricci curvature, see~\cite{BCG}.

\medskip

Following~\cite{GuthICM}, this leads us to introduce the following notion.
The \emph{macroscopic scalar curvature} of a closed Riemannian $n$-manifold~$M$ at scale~$r$ at~$x \in M$, denoted by $\scal_r(M,x)$, is defined as the unique real~$s$ such that
\[
\vol(B_{\tilde{M}}(\tilde{x},r)) = \vol(B_{\hyp_s^n}(r))
\]
where $\tilde{x}$ is a lift of~$x$ in the universal cover~$\tilde{M}$ of~$M$ and~$\hyp_s^n$ is the simply-connected $n$-dimensional space form with constant curvature~$s$.
It is more conveniently characterized as follows
\[
\scal_r(M,x) \leq s \mbox{ if and only if } \vol(B_{\tilde{M}}(\tilde{x},r)) \geq \vol(B_{\hyp_s^n}(r)).
\]
For example, the macroscopic scalar curvature of a flat torus at any scale is zero.
Note that this property fails if one does not take balls in the universal cover of~$M$, but only in~$M$, in the definition of the macroscopic scalar curvature, as otherwise, it would be positive at a large enough scale.
By~\eqref{eq:scal}, at infinitesimally small scale, we have
\[
\lim_{r \to 0} \scal_r(M,x) = \scal(M,x).
\]

In a different direction, the macroscopic scalar curvature at large enough scale provides information on the exponential growth rate of the volume of balls in the universal cover of~$M$, also known as the volume entropy, a much-studied geometric invariant related to the growth of the fundamental group and the dynamics of the geodesic flow.
This leads us to define
\[
V_{\tilde{M}}(r) = \sup_{\tilde{x} \in \tilde{M}} \vol(B(\tilde{x},r))
\]
as the maximal volume of a ball of radius~$r$ in the universal cover of~$M$.
As explained in~\cite{Guth11} and~\cite{GuthICM}, the celebrated theorem of Besson, Courtois and Gallot~\cite{BCG} on the minimal volume entropy provides a macroscopic version of Schoen's conjecture~\ref{conj} at large enough scales.
Stated in a way suited for comparison (ignoring its rigidity counterpart), this result takes the following form.

\begin{theorem}[Besson-Courtois-Gallot~\cite{BCG}] \label{theo:BCG}
Let $(M,{\rm hyp})$ be a closed hyperbolic $n$-manifold and let $g$ be another Riemannian metric on~$M$.
If $\vol(M,g) < \vol(M,{\rm hyp})$ then there exists~$r_0>0$ (depending on~$g$) such that for every $r \geq r_0$
\[
V_{\tilde{M}}(r) > V_{\hyp^n}(r).
\]

%, there exists a ball of radius~$r$ in the universal cover of~$(M,g)$ with at least the same volume as a ball of the same radius in the hyperbolic $n$-space~$\hyp^n$.

In particular, if $\scal_r(g,x) > \scal_r({\rm hyp})$ for every $r$ large enough and every $x \in M$ then $\vol(M,g) > \vol(M,{\rm hyp})$.
\end{theorem}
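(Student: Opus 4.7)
The plan is to reduce the statement to the Besson--Courtois--Gallot minimal volume entropy inequality, which asserts that for any Riemannian metric $g$ on a closed manifold $M$ admitting a hyperbolic metric~$\hyp$,
\[
h(g)^n \, \vol(M,g) \geq h(\hyp)^n \, \vol(M,\hyp),
\]
where $h(\cdot)$ denotes the volume entropy. Since $h(\hyp) = n-1$, the strict hypothesis $\vol(M,g) < \vol(M,\hyp)$ gives $h(g)^n > (n-1)^n$, hence $h(g) > n-1 = h(\hyp^n)$.

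Next, I would translate this inequality of entropies into the desired inequality of ball volumes. Because $M$ is compact, the entropy $h(g)$ can be computed as $\lim_{r \to \infty} \frac{1}{r} \log V_{\tilde{M}}(r)$ (the $\sup$ over base points differs from a fixed base point only by a factor bounded in terms of the diameter, which is absorbed in the limit). On the other hand, the hyperbolic ball volume satisfies $V_{\hyp^n}(r) \sim C_n \, e^{(n-1)r}$ as $r \to \infty$. Fixing $\eps$ with $0 < \eps < h(g) - (n-1)$, one has $V_{\tilde{M}}(r) \geq e^{(h(g) - \eps) r}$ and $V_{\hyp^n}(r) \leq 2 C_n \, e^{(n-1)r}$ for all sufficiently large~$r$; these bounds yield $V_{\tilde{M}}(r) > V_{\hyp^n}(r)$ for every $r \geq r_0$, for some $r_0$ depending on~$g$.

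For the ``in particular'' clause, I would argue by contraposition. By definition of the macroscopic scalar curvature, the hypothesis $\scal_r(g,x) > \scal_r(\hyp) = -1$ for every $x \in M$ and every $r$ large enough is equivalent to the strict inequality $\vol_g(B_{\tilde{M}}(\tilde{x},r)) < \vol_{\hyp^n}(B(r))$ for every such $r$ and~$x$. Taking the supremum over $\tilde{x}$ gives $V_{\tilde{M}}(r) \leq V_{\hyp^n}(r)$ for all large $r$, which by the already-established main statement forces $\vol(M,g) \geq \vol(M,\hyp)$.

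The remaining subtlety, and the main point requiring care, is to upgrade this conclusion to the strict inequality $\vol(M,g) > \vol(M,\hyp)$. If equality held, then either the BCG inequality is strict for~$g$ --- in which case $h(g) > n-1$ and the entropy argument above still produces large $r$ with $V_{\tilde{M}}(r) > V_{\hyp^n}(r)$, contradicting the scalar curvature hypothesis --- or else equality holds in BCG, and the rigidity part of the BCG theorem forces $g$ to be homothetic to $\hyp$; the volume equality then upgrades the homothety to an isometry, so $\scal_r(g,x) = \scal_r(\hyp)$ identically, contradicting the assumed strict inequality. This dichotomy is the only genuinely delicate step; everything else follows by plugging the BCG inequality into the asymptotic definition of entropy.
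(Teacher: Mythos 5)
The paper does not prove this statement; it is quoted verbatim as a theorem of Besson, Courtois and Gallot and cited to~\cite{BCG}, so there is no in-paper argument to compare yours against. Your derivation from the minimal volume entropy inequality $h(g)^n \vol(M,g) \geq h(\hyp)^n \vol(M,\hyp)$ is correct and is indeed the natural way to pass from the entropy form of BCG to the ball-volume form stated here: strictness of $\vol(M,g)<\vol(M,\hyp)$ gives $h(g)>n-1$, the existence of $\lim_{r\to\infty}\frac1r\log V_{\tilde M}(r)=h(g)$ (independent of base point, by compactness) plus $V_{\hyp^n}(r)\sim C_n e^{(n-1)r}$ then yields $V_{\tilde M}(r)>V_{\hyp^n}(r)$ for $r$ large, and the ``in particular'' clause follows by contraposition, with the rigidity half of BCG supplying the strict inequality $\vol(M,g)>\vol(M,\hyp)$ exactly as you describe.

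Two small remarks. First, when you take the supremum over base points to pass from the pointwise strict inequality $\vol_g(B_{\tilde M}(\tilde x,r))<\vol_{\hyp^n}(B(r))$ to the displayed inequality on $V_{\tilde M}(r)$, you may in fact keep strictness, since $x\mapsto\vol_g(B_{\tilde M}(\tilde x,r))$ descends to a continuous function on the compact manifold $M$ and attains its maximum; this slightly simplifies the subsequent contraposition, although the rigidity dichotomy is still needed to separate $\vol(M,g)=\vol(M,\hyp)$ from $\vol(M,g)>\vol(M,\hyp)$. Second, your identification $\scal_r(\hyp)=-1$ depends on the normalization of $\hyp^n_s$ (sectional versus scalar curvature), and the paper's own conventions are slightly ambiguous on this point; what matters for the argument is only that $\hyp^n_{\scal_r(\hyp)}=\hyp^n$, which holds under either convention, so your proof is unaffected.
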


A version of this theorem was first established by A.~Katok~\cite{katok} in dimension~$2$ and a nonsharp version was obtained before by Gromov~\cite{gro82} in every dimension.

\medskip

In~\cite{Guth11}, Guth asks for an estimate on~$r_0$ after proving the following nonsharp macroscopic version of Schoen's conjecture.

\begin{theorem}[Guth~\cite{Guth11}] \label{theo:guth}
Let $(M,{\rm hyp})$ be a closed hyperbolic $n$-manifold and let $g$ be another Riemannian metric on~$M$.
Then, for every $r \geq 1$, there exists a constant $\delta_{n,r}>0$ depending only on~$n$ and~$r$, such that if $\vol(M,g) \leq \delta_{n,r} \, \vol(M,{\rm hyp})$ then 
\[
V_{\tilde{M}}(r) \geq V_{\hyp^n}(r).
\]

%there exists a ball of radius~$r$ in the universal cover of~$(M,g)$ with at least the same volume as a ball of the same radius in the hyperbolic $n$-space~${\rm hyp}^n$.

In other words, if $\scal_r(g,x) \geq \scal_r({\rm hyp})$ for every $x \in M$ then $\vol(M,g) \geq \delta_{n,r} \, \vol(M,{\rm hyp})$.
\end{theorem}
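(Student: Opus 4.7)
The plan is to argue by contraposition: assume $V_{\tilde{M}}(r) < V_{\hyp^n}(r)$ and deduce $\vol(M,g) > \delta_{n,r}\vol(M,\hyp)$ for a positive constant $\delta_{n,r}$ depending only on $n$ and $r$. The argument should combine two ingredients---a Urysohn-width upper bound on $(M,g)$ coming from the smallness of balls in $\tilde{M}$, and a topological lower bound on volume exploiting the fact that a closed hyperbolic manifold is essential and carries positive simplicial volume.

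For the first ingredient I would invoke a Guth-type width inequality: if balls of appropriate radii in $\tilde{M}$ have controlled volume, then $M$ admits a continuous map $\pi \colon M \to K$ to a polyhedron of dimension at most $n-1$ whose fibers have diameter controlled by the scale. When $r$ is close to $1$ the hyperbolic ball volume $V_{\hyp^n}(r)$ is comparable to $\omega_n r^n$, and Guth's original width inequality---which exploits a polynomial-in-$r$ volume bound on $r$-balls---applies directly. For $r \geq 1$ larger, however, $V_{\hyp^n}(r)$ grows exponentially in $r$ and the polynomial hypothesis is badly violated, so one must call upon the genuine refinement of the width inequality announced in the abstract: one that uses volume bounds on balls whose radii range over an interval and produces collapsings at several scales simultaneously. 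For the second ingredient, since $M$ is closed and hyperbolic, it is aspherical and essential with positive simplicial volume $\|M\|$ proportional (with constants depending only on $n$) to $\vol(M,\hyp)$ by Gromov--Thurston. A Gromov/Guth-type essentiality inequality then turns the existence of $\pi$ with $O(r)$-small fibers into a lower bound of the form $\vol(M,g) \gtrsim c_n r^n \|M\|$, and substituting the proportionality yields $\vol(M,g) \geq \delta_{n,r}\vol(M,\hyp)$, contradicting the assumption.

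The hardest step is the first. Guth's original Urysohn width inequality operates at a single scale with a polynomial volume bound, and this is very far from the kind of control one has at scales $r \geq 1$ where the hyperbolic comparison volume is exponential. Extracting a small-fiber map to a low-dimensional polyhedron therefore requires a genuine scale-sensitive, multi-scale generalization of the width inequality---apparently the technical heart of the paper, and the source of its extension to polyhedral length spaces. Once such a refined width estimate is in hand, the essentiality-based lower bound on volume should follow from established techniques in the Gromov--Guth circle of ideas, and the resulting constant $\delta_{n,r}$ will depend on $n$ and $r$ alone through the scale at which the width collapse is extracted.
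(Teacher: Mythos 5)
This statement is not proved in the paper at all: it is Guth's theorem, quoted verbatim from~\cite{Guth11} as background for the paper's own result (Theorem~\ref{theo:hyp}, which upgrades $\delta_{n,r}$ to an $r$-independent $\delta'_n$). There is therefore no ``paper's proof'' to compare your attempt against; the honest answer is simply a citation. If you do want to reprove it, you should be reconstructing the argument of~\cite{Guth11}, not the argument of the present paper.

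Setting that aside, your sketch has two substantive problems. First, you conflate several distinct contributions. Guth's 2011 proof of Theorem~\ref{theo:guth} does \emph{not} go through a Urysohn/Alexandrov width inequality: it uses an ad hoc ``rectangular nerve'' construction, builds a map from $M$ to a low-dimensional polyhedron directly from a cover by small balls, and then compares the induced fundamental class to the simplicial volume. The width inequality you invoke (``if balls of radius $R$ in $M$ have volume $\leq \varepsilon_n R^n$ then the width is $\leq R$'') appeared only later, in Guth's 2017 paper~\cite{Guth17} (Theorem~\ref{theo:guth2} here), and the Schoen--Yau minimal-hypersurface approach you allude to is from~\cite{P} and~\cite{nab}, later still. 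Second, and more importantly, you claim that for $r\ge 1$ ``one must call upon the genuine refinement of the width inequality announced in the abstract'' --- i.e.\ the multi-scale $\rho$-collapsing result (Theorem~\ref{theo:0}) that is the technical core of \emph{this} paper. That is backwards: the whole reason Theorem~\ref{theo:guth} is accessible with 2011 technology is precisely that $\delta_{n,r}$ is allowed to deteriorate (exponentially) with $r$. One only needs the multi-scale machinery to obtain the \emph{uniform} constant $\delta'_n$ of Theorem~\ref{theo:hyp}. Finally, your step from ``width collapse'' to ``volume lower bound'' is also not a one-liner: the naive width inequality only produces a single large ball, not a global volume bound versus $\|M\|$; Guth's 2011 argument needs the explicit nerve construction to turn local ball volumes into a global count that controls $\vol(M,g)$. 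In short: the broad shape (small balls $\Rightarrow$ low-dimensional nerve map $\Rightarrow$ contradiction with essentiality/simplicial volume) is in the right Gromov--Guth circle of ideas, but the specific tools you name belong to later papers, and the attribution of the needed refinement to the present paper inverts the logical order.
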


Further volume lower bounds have recently been obtained by Alpert and Funano for hypersurfaces in closed manifolds with macroscopic scalar curvature bounded below as a consequence of this result; see~\cite{AF17} and~\cite{A}.

\medskip

Theorem~\ref{theo:guth} gives relatively better volume estimates for unit balls (that is, for $r=1$) than for balls of large radius as the constant~$\delta_{n,r}$ falls off exponentially or faster with~$r$.
In~\cite{Guth11}, Guth suggests that one could try to combine the approaches of~\cite{gro82}, \cite{BCG} and~\cite{Guth11} to obtain a uniform volume estimate with $\delta_{n,r} = \delta_n$ depending only on~$n$.
Such a uniform bound was obtained for surfaces by Karam~\cite{K15}.
In higher dimension, Balacheff and Karam~\cite{BK19} proved a similar result for negatively curved metrics~$g$ using techniques developed in~\cite{gro82}.

\medskip

In this article, we establish the following result in this direction, following a different approach.
See Theorem~\ref{theo:ball}, Corollary~\ref{coro:ball} and Corollary~\ref{coro:final} for more general statements.

\begin{theorem} \label{theo:hyp}
Let $(M,{\rm hyp})$ be a closed hyperbolic $n$-manifold and let $g$ be another Riemannian metric on~$M$.
Then, there exists a constant $\delta'_{n}>0$ depending only on~$n$, such that if \mbox{$\vol(M,g) \leq \delta'_{n}$} then
\[
V_{\tilde{M}}(r) \geq V_{\hyp^n}(r)
\]
for every $r \geq 1$.

%there exists a ball of radius~$r$ in the universal cover of~$(M,g)$ with at least the same volume as a ball of the same radius in the hyperbolic $n$-space~$\hyp^n$.

In other words, if $\scal_r(g,x) \geq \scal_r({\rm hyp})$ for some $r \geq 1$ and every $x \in M$ then \mbox{$\vol(M,g) \geq \delta_{n}'$.}
\end{theorem}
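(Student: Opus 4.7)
I would prove the theorem by contraposition: assume for a dimensional constant $\delta'_n$ that $\vol(M,g)<\delta'_n$, and suppose further that there is some scale $r_*\geq 1$ at which $V_{\tilde M}(r_*)<V_{\hyp^n}(r_*)$, meaning every ball of radius $r_*$ in $\tilde M$ has volume strictly less than its hyperbolic counterpart. The goal is to derive a contradiction.

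At the base scale $r=1$, Guth's Theorem~\ref{theo:guth} produces a point $\tilde x_0\in\tilde M$ with $\vol(B(\tilde x_0,1))\geq V_{\hyp^n}(1)$ as soon as $\vol(M,g)\leq\delta_{n,1}\vol(M,\mathrm{hyp})$. Since closed hyperbolic $n$-manifolds carry a dimensional lower bound on their volume (Gauss--Bonnet in dimension $2$, Thurston's volume spectrum in dimension $3$, Kazhdan--Margulis in dimension $\geq 4$), this unit-scale estimate may be folded into $\delta'_n$. The real content is to promote the unit-scale bound to every scale $r\geq 1$ with a constant that does not depend on $r$.

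The central tool is a multiscale Alexandrov/Urysohn width inequality, generalizing Gromov's and Guth's single-scale width bounds. I would introduce a Margulis-type function
\[
\mu(\tilde x)=\inf\{\,r\geq 1:\vol(B(\tilde x,r))<V_{\hyp^n}(r)\,\}
\]
recording the first scale of local collapse, which is pointwise $\leq r_*$ under the standing assumption. Decomposing $\tilde M$ according to level sets of $\mu$, applying a local single-scale width estimate at each scale, and gluing the pieces together, one assembles an upper bound on the relevant Urysohn $(n-1)$-width. The exponential growth of $V_{\hyp^n}(r)$ makes the scale-by-scale contributions summable, so the assembled estimate does not depend on $r_*$, in sharp contrast with Guth's scale-dependent $\delta_{n,r}$.

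Finally, because $M$ admits a hyperbolic metric it has positive simplicial volume and its fundamental group is non-elementary word-hyperbolic; this furnishes a topological obstruction, in the spirit of Gromov's essentiality inequalities, incompatible with too small an Urysohn width. Combining the obstruction with the multiscale upper bound produces the sought contradiction once $\delta'_n$ is chosen small enough in terms of $n$ alone. The main obstacle I expect is precisely the construction of the multiscale width inequality: the telescoping across scales must cancel the scale-dependence of Guth's single-scale estimate, and this is where the exponential growth of the hyperbolic ball volumes plays the decisive role. This is the generalization of recent progress in metric geometry alluded to in the abstract, and what ultimately yields the dimensional constant $\delta'_n$.
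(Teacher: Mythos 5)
Your proposal has the right narrative shape (multiscale width, Margulis-type function, simplicial-volume obstruction), but it is missing the core mechanism of the paper's proof, and the specific version of the multiscale idea you sketch does not seem to work as stated.

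First, the central object in the paper is not a volumetric Margulis function like your $\mu(\tilde x)=\inf\{r\geq 1:\vol(B(\tilde x,r))<V_{\hyp^n}(r)\}$, but a group-theoretic one: $\mu_\phi(x)$ is the largest $\mu$ such that the subgroup of $\pi_1(M)$ generated by (images of) loops of length $\leq\mu$ based at $x$ has subexponential growth. Your definition already bakes in the conclusion you are trying to prove (ball volume comparison) and, more importantly, carries no group-theoretic information. The group-theoretic definition is essential because it feeds into two separate parts of the argument that your outline does not contain: (i) the non-collapsing statement that $M$ does not $\tfrac12\mu_\phi$-collapse follows from \emph{nonvanishing simplicial volume} via Gromov's theorem (amenable covers kill simplicial volume), which requires that the sets in the covering have subexponential image in $\pi_1$; and (ii) at the distinguished point $x_0$ produced by the non-collapse argument, loops of length $\approx\mu_\phi(x_0)$ generate a subgroup of exponential growth, hence (by a \emph{quantitative Tits alternative} for the fundamental group of a negatively curved manifold) a nonabelian free subgroup on generators of controlled length.

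Second, and this is the decisive gap, you never explain how a point with a Euclidean-type volume bound at small scale produces an \emph{exponential} volume bound at large scale. In the paper this is precisely where the free subgroup is used: one chooses generators $\alpha_1,\alpha_2$ of a free group $F\le\pi_1(M,x_0)$ with displacement $\lesssim\kappa\,\mu_\phi(x_0)$, and then shows (by a minimality argument on the number of orbit points in a ball) that the translates of $B_{\tilde M}(\tilde x_0,\tfrac12\mu_\phi(x_0))$ under $F$ are pairwise disjoint. Summing the volume of exponentially many disjoint translates is what gives the hyperbolic-type lower bound $\vol(B_{\tilde M}(\tilde x_0,r))\gtrsim 3^{r/\mu_\phi(x_0)}$ for large $r$. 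Your proposal skips straight from "multiscale width bound" to "contradiction with the obstruction" and never produces this mechanism.

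Third, the multiscale width theorem itself is not obtained by "decomposing $\tilde M$ by level sets of $\mu$ and gluing single-scale estimates." The paper re-runs the Papasoglu--Nabutovsky minimal separating hypersurface argument on the compact space $M$ with a variable radius function $\rho$, proving directly that if $M$ does not $\rho$-collapse then some $x_0$ satisfies $\HH_n(B(x_0,r))\geq c_n r^n$ for \emph{all} $r\in[0,\rho(x_0)]$. This is a genuine re-proof at the level of the induction on dimension; gluing together independent single-scale width bounds is unlikely to yield a Euclidean-type volume bound at the \emph{same} point for \emph{all} radii up to $\rho(x_0)$, which is exactly the property one needs to feed into the translate argument. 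Also note the paper collapses $M$, not $\tilde M$; compactness of $M$ is used both for the existence of a near-minimal separating hypersurface and for the limiting argument that extracts $x_0$.

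Finally, two smaller remarks. The paper does not use Guth's Theorem~\ref{theo:guth} at unit scale at all; it builds everything from the non-collapse machinery, so your appeal to Kazhdan--Margulis / Thurston's volume bound to remove the dependence on $\vol(M,\mathrm{hyp})$ is extraneous to the actual argument (though not wrong on its own terms). And the paper proves more than the theorem states: it finds a single center $\tilde x_0$ that works for all $r\geq 1$ simultaneously, which your contraposition scheme would not automatically give even if the multiscale width step could be made to work.
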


This result provides a uniform estimate on the radius of balls in the universal cover of~$M$ whose volume is at least the volume of balls of the same radius in~$\hyp^n$.
Furthermore, the balls of radius~$r \geq 1$ of large volume in the universal cover of~$M$ can be assumed to be centered around the same point; see Corollary~\ref{coro:final}.
Actually, we obtain a more general lower bound on the maximal volume of balls that holds for every~$r \geq 0$; see Corollary~\ref{coro:ball}.
However, contrary to Theorem~\ref{theo:guth} and Theorem~\ref{theo:BCG}, the condition of the volume of~$M$ which guarantees the existence of balls of large volume in the universal cover of~$M$ does not involve the volume of the hyperbolic metric in~$M$.
This leads to a stronger condition for manifolds of large hyperbolic volume.
Note that similar versions of Theorem~\ref{theo:hyp} hold for more general spaces, including closed manifolds admitting negatively curved metrics.
See Corollary~\ref{coro:ball} and Corollary~\ref{coro:final}.

\medskip

The overall strategy of the proof of Theorem~\ref{theo:hyp} is inspired by the approach developed in~\cite{S} and extended in~\cite{BS} to prove that the minimal volume entropy of a closed manifold admitting a hyperbolic metric is positive.
(As explained above, this result was first obtained in~\cite{gro82} using bounded cohomology argument, before a sharp version was established in~\cite{BCG} using the so-called barycenter map.)
In~\cite{S}, we show that for every closed Riemannian $n$-manifold admitting a hyperbolic metric, there exist two loops~$\gamma_1$ and~$\gamma_2$ based at the same point, of length $\lesssim \vol(M,g)^{\frac{1}{n}}$, whose homotopy classes generate a subgroup~$\Gamma \leqslant \pi_1(M)$ of positive algebraic entropy.
The proof of this estimate relies on filling techniques developed in~\cite{gro83} to establish systolic inequalities, and more precisely, on a lower bound on the filling radius.
A lower bound on the volume entropy of~$M$ immediately follows from this estimate by standard comparison argument.
Now, in order to derive a lower bound of the volume of balls in the universal cover of~$M$ for any radius, and not simply for asymptotically large radius, we need to show that there is sufficiently volume around the basepoint of the two loops~$\gamma_1$ and~$\gamma_2$ to distribute it in the universal cover of~$M$ under the action of~$\Gamma$.
The approach developed in~\cite{S}, based on filling techniques, does not readily provide any information on where the volume is located and whether there is enough volume around the basepoint of the two loops.
In a different direction, Guth~\cite{Guth11} shows that the volume in the universal cover of a closed aspherical Riemannian manifold is not too diffuse and that a non-negligeable part of it is contained in a ball of given radius, but this result does not say where such a ball is located\footnote{Though we believe we can combine the argument of~\cite{S} with the construction in~\cite{Guth11} to obtain the desired result, we will use different, more elementary, techniques to reach the same conclusion.
These techniques also have the advantage to apply to more general spaces than closed manifolds.}.
Later, building upon on the construction of~\cite{Guth11}, he extended this result into the following theorem.

\begin{theorem}[Guth~\cite{Guth17}] \label{theo:guth2}
Let $M$ be a closed Riemannian $n$-manifold.
If every ball of radius~$R$ in~$M$ has volume at most~$\varepsilon_n R^n$ for a sufficiently small constant~$\varepsilon_n >0$ then the Alexandrov width of~$M$ is at most~$R$.
That is, there exists a continuous map from~$M$ to a simplicial $(n-1)$-complex~$P$ whose fibers have radius at most~$R$.
\end{theorem}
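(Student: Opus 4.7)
The plan is to reduce the problem to a covering argument: I would cover $M$ by a family of balls of radius at most $R$ whose nerve is a simplicial complex of dimension at most $n-1$, and then map $M$ to this nerve via a partition of unity. The dimension bound on the nerve is equivalent to the statement that no more than $n$ balls of the cover meet at any given point, and this is where the volume hypothesis must be used.

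First, I would construct the cover adaptively. Rather than fixing a single radius, for each $x \in M$ I would select a radius $r(x) \in (0, R]$ at which the volume bound becomes nearly saturated, i.e., $\vol(B(x, r(x)))$ is of order $\varepsilon_n \, r(x)^n$. A Vitali-type selection then yields a disjoint subfamily $\{B(x_i, r_i/5)\}$ whose concentric fivefold enlargements $\{B(x_i, r_i)\}$ cover $M$, with each $r_i \leq R$.

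Next comes the multiplicity bound. Suppose $k$ balls of the cover share a common point $y$. By the Vitali distortion estimate, the radii are pairwise comparable and the disjoint small balls $B(x_{i_j}, r_{i_j}/5)$ all lie inside a single ball around $y$ of radius bounded by a constant times $\max_j r_{i_j} \leq R$. Summing the volumes of the small balls from below (via the adaptive saturation) and comparing with the hypothesis applied to the enclosing ball forces $k \leq n$. Hence the nerve $P$ of the cover is an $(n-1)$-dimensional simplicial complex. With the combinatorics in hand, I would define $\Phi: M \to P$ via a partition of unity $\{\varphi_i\}$ subordinate to the cover, sending $x \mapsto \sum_i \varphi_i(x)\,v_i$ where $v_i$ denotes the vertex of $P$ associated to $B(x_i, r_i)$. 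Continuity is routine, and the fiber over a point $z$ in the simplex $[v_{i_1}, \dots, v_{i_k}]$ lies in $\bigcup_j B(x_{i_j}, r_{i_j})$; since these balls meet at a common point, the fiber sits in a single ball of radius comparable to $R$, and after calibrating the initial radii by a harmless constant one obtains fibers of radius at most $R$.

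The main obstacle, as I see it, is to push the multiplicity argument through using only an upper bound on the volume of balls of radius $R$: a matching lower volume bound on the small balls $B(x_i, r_i/5)$ is not provided by the hypothesis and has to be manufactured. This is the whole reason for the adaptive scale selection in the first step, and the delicate point is ensuring the upper and lower bounds remain compatible along the Vitali selection while keeping the numerical constant $\varepsilon_n$ dependent only on $n$. This bookkeeping is the technical heart of the construction introduced in~\cite{Guth11} and refined in~\cite{Guth17}.
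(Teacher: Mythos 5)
Your proposal reconstructs (in outline) Guth's original ``good cover'' strategy from~\cite{Guth11}, but there is a genuine gap at exactly the place you flag, and it is worth being precise about why. To obtain a nerve of dimension~$n-1$ you need multiplicity~$\leq n$, and you propose to force this by packing the disjoint Vitali cores $B(x_{i_j}, r_{i_j}/5)$ into a single ball around a common point and comparing volumes. Two things go wrong. First, the hypothesis is an upper bound on $\vol(B(\cdot,R))$ at the \emph{single} scale $R$; it gives $\vol(B(y,\rho)) \leq \varepsilon_n R^n$ for $\rho \leq R$, not $\leq \varepsilon_n \rho^n$, so once the enclosing ball has radius $\rho \ll R$ the hypothesis yields nothing useful, and $\vol(B(x,r))/r^n$ is not monotone in~$r$ for a general metric, so you cannot propagate the bound downward in scale. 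Second, even granting comparable radii and a matching lower volume bound manufactured by adaptive scale selection, the packing estimate produces $k \lesssim (\lambda_{\mathrm{distortion}})^n \cdot \varepsilon_n/\varepsilon_n'$, a constant $C(n)$ that there is no mechanism to push down to $n$; the Vitali lemma does not make overlapping radii comparable within any fixed factor, and the dimensionally sharp multiplicity $n$ is simply not a packing phenomenon. The dimension bound in all known proofs comes instead from an induction on dimension via \emph{slicing}: one shows that a small-volume ball admits a small-volume separating hypersurface (coarea/isoperimetric), applies the result in dimension $n-1$ to that hypersurface, and assembles the cover so that each descent in scale costs exactly one unit of multiplicity.

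It is also worth noting that the present paper does not prove Theorem~\ref{theo:guth2} at all: it is cited as a known result of Guth~\cite{Guth17}. What the paper actually proves is the more general Theorem~\ref{theo:0} (equivalently Theorem~\ref{theo:x_0}), by the Schoen--Yau/Guth/Papasoglu/Nabutovsky route: take a nearly volume-minimizing $(n-1)$-subcomplex $Y$ that $\rho$-separates~$X$, show by Lemma~\ref{lem:XY} that non-collapsing of $X$ forces non-collapsing of $Y$, apply the inductive volume lower bound to~$Y$, compare $Y$ with the competitor obtained by swapping in a metric sphere, and integrate via Eilenberg's inequality. This is a separating-hypersurface argument, not a covering/nerve argument, and it is the right framework here precisely because it builds in the dimension reduction from the start. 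If you want to repair your proposal you would have to import that inductive slicing step; the Vitali + packing outline, as written, cannot deliver multiplicity~$\leq n$.
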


%More generally, based on the construction of~\cite{Guth11}, he shows that if every ball of radius~$R$ in a closed Riemannian manifold~$M$ has volume at most~$\varepsilon_n R^n$ for a sufficiently small constant~$\varepsilon_n >0$ then the Urysohn width of~$M$ is at most~$R$.
%Specifically, there exists a map $M \to P$ to a simplicial $(n-1)$-complex~$P$ whose fibers have diameter at most~$R$ (\ie, $M$ is close to a simplicial $(n-1)$-complex).
This theorem was generalized by Liokumovich, Lishak, Nabutovsky and Rotman~\cite{LLNR} to metric spaces and Hausdorff content instead of volume.
The proof of this result was greatly simplified by Papasoglu~\cite{P} using the minimal hypersurface approach of Schoen and Yau~\cite{SY79} developed by Guth~\cite{Guth10} in a similar context.
Recently, Nabutovsky~\cite{nab} further extended this approach improving the constant from an exponential bound to a linear one.

\medskip

Now, one can show that the Alexandrov width of a closed Riemannian $n$-manifold~$M$ admitting a hyperbolic metric is bounded from below (up to a multiplicative constant depending only on~$n$) by the minimal length~$L$ such that there exist two loops~$\gamma_1$ and~$\gamma_2$ based at the same point, of length at most~$L$, whose homotopy classes generate a free subgroup of~$\pi_1(M)$; see Section~\ref{sec:margulis}.
This estimate can be interpreted as a bound on Margulis' constant; see Definition~\ref{def:margulis}.
Thus, by Theorem~\ref{theo:guth2}, there exists a ball of radius $\frac{1}{2} L$ in~$M$ of volume at least $c_n L^n$.
As explained above, we would like to show that such a ball~$B$ is centered at the basepoint of the two loops~$\gamma_1$ and~$\gamma_2$ in order to derive an exponential lower bound on the volume of balls in the universal cover~$\tilde{M}$ by taking the translates of~$B$ in~$\tilde{M}$ under the free subgroup induced by the two loops in homotopy.
This does not follow directly from the Alexandrov/Urysohn width estimates of \cite{Guth17}, \cite{LLNR}, \cite{P} or~\cite{nab}.
Instead, we need to extend Theorem~\ref{theo:guth2} by allowing the fibers in the definition of the Alexandrov width to have diameters of various size so that they capture the thick part of~$M$ in an optimal way.

\medskip

This leads us to introduce the following definition.

%A compact $n$-polyhedral length space is a finite simplicial $n$-complex endowed with an intrinsic metric (or length structure).

\begin{definition} \label{def:collapse}
Recall that a compact $n$-polyhedral length space is a finite simplicial $n$-complex endowed with an intrinsic metric (or length structure).
Let $X$ be a compact $n$-polyhedral length space and $\rho:X \to (0,\infty)$ be a function.
The space~$X$ \emph{$\rho$-collapses} if there exists a continuous map $\pi:X \to P$ to a finite simplicial $(n-1)$-complex such that every fiber of~$\pi$ lies in a ball $B(x,\rho(x))$ for some $x \in X$.
\end{definition}

Note that the Alexandrov width of a space~$X$ that $\rho$-collapses is bounded by~$\sup \rho$.
Thus, the bound on the size of the fibers of $\pi:X \to P$ in Definition~\ref{def:collapse} better reflects the shape of~$M$ than the corresponding uniform bound on the size of the fibers of the map in the definition of the Alexandrov width; see Theorem~\ref{theo:guth2}.
The collapsing of~$X$ can also be interpreted in terms of coverings; see Proposition~\ref{prop:mult}.

\medskip

We have the following general theorem which extends Theorem~\ref{theo:guth2} and some of its generalizations in~\cite{LLNR}, \cite{P} or~\cite{nab}.
The point of this theorem is that it allows us to have a better control on where the space~$X$ can collapse or not.

\begin{theorem} \label{theo:0}
Let $X$ be a compact $n$-polyhedral length space and $\rho:X \to (0,\infty)$ be a continuous function.
Suppose that the space~$X$ does not $\rho$-collapse.
Then there exists $x_0 \in X$ such that
\[
\HH_n(B_X(x_0,r)) \geq c_n \, r^n
\]
for every $r \in [0,\rho(x_0)]$, where $c_n$ is an explicit positive constant depending only on~$n$.
\end{theorem}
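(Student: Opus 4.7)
The plan is to prove the contrapositive: if for every $x_0 \in X$ there is some $r \in [0,\rho(x_0)]$ with $\HH_n(B_X(x_0,r)) < c_n\, r^n$, then $X$ already $\rho$-collapses. I would follow the minimal-hypersurface slicing approach of Papasoglu~\cite{P} and Nabutovsky~\cite{nab}, which gives the cleanest proof of Theorem~\ref{theo:guth2}, adapted to a variable scale~$\rho(\cdot)$ in place of a single global scale. The argument proceeds by induction on the dimension $n$; the case $n=1$ is elementary, since a $1$-complex failing the volume bound at every point admits a straightforward collapsing to a finite $0$-complex.

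For the inductive step, I would recursively subdivide $X$ by minimal separating polyhedral hypersurfaces. At each stage, if the current piece $Y$ does not already lie inside some ball $B(x,\rho(x))$ with $x \in Y$, the next step is to pick two points of $Y$ at distance comparable to the scale dictated by $\rho$ and replace $Y$ by a minimizing $(n-1)$-polyhedral hypersurface $\Sigma \subset Y$ separating them. Under the standing hypothesis, a coarea argument gives at each point $x$ a radius $r_x \leq \rho(x)$ for which any minimal separator inside $B(x,r_x)$ has $(n-1)$-Hausdorff measure bounded by $C_n\, r_x^{n-1}$, with $C_n$ as small as desired once $c_n$ is taken small enough. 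Iterating this procedure until all remaining pieces are $\rho$-small produces a compact $(n-1)$-polyhedral wall $W \subset X$ whose complement decomposes into pieces $Y_i$, each contained in some $B(x_i,\rho(x_i))$.

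Applying the induction hypothesis to $W$ with an appropriately rescaled restriction $\rho_W$ of $\rho$ yields a $\rho_W$-collapsing map $W \to P^{n-2}$ whose fibers lie in small balls of $W$, hence in balls of $X$ of radius dominated by $\rho$. The desired map $X \to P^{n-1}$ is then assembled by attaching one additional vertex $v_i$ to $P^{n-2}$ for each piece $Y_i$ and extending the map continuously over $X$ so that each $Y_i$ maps into the star of $v_i$, with $\partial Y_i \subset W$ matching the map already defined on $W$. Every fiber of the resulting map is then either a fiber of $W \to P^{n-2}$ (lying in a small ball of $X$) or a subset of a single piece $Y_i$ (lying in $B(x_i,\rho(x_i))$), so $X$ indeed $\rho$-collapses.

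The main obstacle is running the hypersurface construction faithfully in the variable-scale setting: one must simultaneously arrange that each piece of $X \setminus W$ fits within the ball of its \emph{own} allowed radius $\rho(x_i)$, that $W$ inherits a coherent scale function $\rho_W$ compatible with the inductive hypothesis, and that the constant $c_n$ does not degenerate after $n$ steps. Controlling this interaction -- via the coarea estimate, a Vitali-type selection of witness balls, and Nabutovsky's~\cite{nab} linear-in-$n$ refinement of the originally exponential bound in~\cite{P} -- is where the bulk of the technical work lies.
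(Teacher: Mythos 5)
You have the right toolbox: the Papasoglu--Nabutovsky minimal-hypersurface approach, induction on dimension, coarea/Eilenberg estimates, and the nerve-style reassembly of a collapsing map from the pieces. This is indeed the framework the paper uses. However, there is a genuine gap in the step where you invoke the induction hypothesis on the wall $W$. In your contrapositive framing, that hypothesis requires you to verify that \emph{every} point of $W$ fails the $(n-1)$-dimensional volume bound at some radius below $\rho_W$; you never establish this. Bounding the $(n-1)$-area of each individual spherical cut by $C_n r_x^{n-1}$ via coarea does not control the union $W$: the accumulated wall from iterated cutting can perfectly well have large $(n-1)$-measure in some ball even though each cut is individually thin. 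The ingredient that converts the hypothesis on $X$ into a hypothesis on the separating wall is the \emph{competitor argument} --- if the wall is taken to be a (near) minimizer of $(n-1)$-Hausdorff measure among all $\rho$-separating subcomplexes, then any ball of $X$ in which the wall is large can be replaced by the sphere of that ball, contradicting minimality --- and that comparison is unavailable for your iteratively built $W$ because iteration does not preserve global minimality.

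The paper sidesteps this by arguing in the direct rather than the contrapositive direction, and the mechanism for handing off to the induction hypothesis is structural rather than metric: a (near) minimal $\rho$-separating $(n-1)$-subcomplex $Y$ is taken once and for all, and Lemma~\ref{lem:XY} shows that $Y$ fails to $\rho_Y$-collapse whenever $X$ fails to $\rho$-collapse, with no volume input whatsoever. Induction then produces a good point $y_0 \in Y$, and it is \emph{after that} that the competitor (replace $Y\cap B_X(y_0,s)$ by the sphere $S_X(y_0,s)$, use near-minimality of $Y$) gets used --- not to propagate the volume failure to $Y$, but to produce the sphere-area lower bound $\HH_{n-1}(S_X(y_0,s)) \gtrsim s^{n-1}$ for $X$ itself, which is then integrated against the Eilenberg inequality. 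You should also note that the explicit constant $c_n$ requires the careful $\varepsilon$-$\alpha$ bookkeeping in the paper and a final compactness/limit step to pass from $r\in[\alpha\rho(x_0),\rho(x_0)]$ to $r\in[0,\rho(x_0)]$; in your outline this is acknowledged as a difficulty but left unresolved, and with your multi-stage cutting it is unclear how to prevent the constant from degenerating.
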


The proof of this theorem relies on the general approach developed by Papasoglu~\cite{P} and recently further improved by Nabutovsky~\cite{nab}.
As explained above, this approach can be traced back to works of Guth~\cite{Guth10} and, before, of Schoen and Yau~\cite{SY79}.
We will follow the arguments of~\cite{nab}.

\medskip

Now, if~$\rho$ is given by the Margulis function, see Definition~\ref{def:margulis}, and the space~$X$ does not $\rho$-collapse, we can show that, under some condition of the fundamental group of~$X$, there exist two loops of~$X$ based at~$x_0$, of length~$\lesssim \rho(x_0)$, generating a free subgroup in~$\pi_1(X)$ as desired.

\medskip

The article is organized as follows.
In Section~\ref{sec:collapse}, we prove Theorem~\ref{theo:0}.
Then, in Section~\ref{sec:margulis}, we introduce the Margulis function and derive a general version of Theorem~\ref{theo:hyp}.

%\medskip\noindent \emph{Acknowledgment.}  The author would like to thank the Fields Institute and the Department of Mathematics at the University of Toronto for their hospitality while this work was completed.

%Examples of spaces that do not collapse can be found in Proposition~\ref{prop:sys} and Proposition~\ref{prop:simpl}.

\section{Collapse and volume localization} \label{sec:collapse}

The goal of this section is to give an interpretation of the collapse of a metric space~$X$ in terms of coverings through the following classical result and to prove Theorem~\ref{theo:0} and related results.
Given a subset~$A \subseteq X$, we denote by~$\bar{A}$ its closure in~$X$.

\begin{proposition} \label{prop:mult}
Let $X$ be a compact $n$-polyhedral length space and $\rho:X \to (0,\infty)$ be a function.
The space~$X$ $\rho$-collapses if and only if there exists a finite covering of~$X$ of multiplicity at most~$n$ by open subsets~$U_i$ whose closures~$\bar{U}_i$ are covered by balls $B(x_i,\rho(x_i))$ with $x_i \in X$.
\end{proposition}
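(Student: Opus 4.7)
The plan is to prove both implications through the standard dictionary between continuous maps to simplicial complexes and open covers via the nerve construction, being careful to track the size of fibers versus the size of closures.

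For the direction ($\Leftarrow$), suppose we are given a finite open cover $\{U_i\}_{i=1}^N$ of~$X$ with $\bar{U}_i \subseteq B(x_i,\rho(x_i))$ and of multiplicity at most~$n$. I would let $P$ be the nerve of $\{U_i\}$: it is a finite simplicial complex with one vertex~$v_i$ per~$U_i$ and a $k$-simplex $[v_{i_0},\dots,v_{i_k}]$ whenever $U_{i_0}\cap\cdots\cap U_{i_k}\neq\emptyset$. The multiplicity assumption means that at each point of~$X$ at most~$n$ of the~$U_i$ overlap, so $P$ has dimension at most~$n-1$. Choosing a partition of unity $\{\phi_i\}$ subordinate to $\{U_i\}$, I would define $\pi\colon X\to P$ by barycentric coordinates $\pi(x)=\sum_i \phi_i(x)\,v_i$. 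Any fiber $\pi^{-1}(p)$ consists of points with the same support of~$\{\phi_i\}$, so if $\phi_{i_0}(x)>0$ for every $x\in\pi^{-1}(p)$ (which must occur for at least one~$i_0$), then $\pi^{-1}(p)\subseteq U_{i_0}\subseteq B(x_{i_0},\rho(x_{i_0}))$, showing that $X$ $\rho$-collapses.

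For the direction ($\Rightarrow$), suppose $\pi\colon X\to P$ realizes a $\rho$-collapse with $\dim P\leq n-1$. The key input is the following consequence of compactness of~$X$ and continuity of~$\pi$: for every $p\in P$, since $\pi^{-1}(p)$ is contained in the open ball $B(x_p,\rho(x_p))$, there is an open neighborhood $W_p$ of~$p$ in~$P$ whose closure $\bar W_p$ satisfies $\pi^{-1}(\bar W_p)\subseteq B(x_p,\rho(x_p))$. I would then pass to a sufficiently fine barycentric subdivision of~$P$, chosen by a Lebesgue-number argument so that the closed star $\overline{\mathrm{St}(v)}$ of every vertex~$v$ lies in some~$W_{p(v)}$. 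Setting $U_v=\pi^{-1}(\mathrm{St}(v))$, the family $\{U_v\}$ is a finite open cover of~$X$. Since the open stars of a simplicial $(n-1)$-complex have multiplicity at most~$n$ (a point in the interior of a $k$-simplex sits in exactly $k+1\leq n$ open stars), the same bound holds for $\{U_v\}$, and by continuity $\bar U_v\subseteq\pi^{-1}(\overline{\mathrm{St}(v)})\subseteq B(x_{p(v)},\rho(x_{p(v)}))$.

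The one point that requires genuine care is the upper semicontinuity step in the forward direction: a priori, the ball containing the fiber $\pi^{-1}(p)$ varies with~$p$, and one must arrange that nearby fibers can be grouped into finitely many balls without inflating the radii. This is exactly what the compactness-based choice of~$W_p$ accomplishes (and is the only place continuity of~$\rho$ is used, to know that slightly enlarging the center does not destroy the inclusion). Everything else — the dimension count for the nerve, the partition-of-unity construction of~$\pi$, and the elementary fact that open stars in an $(n-1)$-complex have multiplicity $\leq n$ — is standard combinatorial topology.
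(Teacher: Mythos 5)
Your proof is correct and follows essentially the same route as the paper: nerve plus partition of unity for the direction from cover to map, and preimages of open stars after a sufficiently fine subdivision (with a tube--lemma/Lebesgue--number argument, which the paper compresses into ``subdividing $P$ if necessary'') for the direction from map to cover. One small inaccuracy in your closing remark: nowhere does the forward direction actually use continuity of~$\rho$ --- indeed the proposition does not assume $\rho$ is continuous --- since the center $x_{p(v)}$ is fixed once you choose~$W_{p(v)}$, so there is no ``slight enlarging of the center'' to control.
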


\begin{remark}
The statement of the proposition would still hold by requiring only~$U_i$ to be covered by a ball $B(x_i,\rho(x_i))$.
Though more technical, the current statement simplifies the proofs of forthcoming results, in particular Lemma~\ref{lem:XY}.
\end{remark}

\begin{proof}[Proof of Proposition~\ref{prop:mult}]
Let $\pi:X \to P$ be a continuous map to a finite simplicial $(n-1)$-complex~$P$ such that every fiber of~$\pi$ lies in a ball $B(x,\rho(x))$ for some $x \in X$.
Since $P$ is a finite simplicial $(n-1)$-complex, the finite covering of~$P$ formed of the open stars~${\rm st}(v) \subseteq P$ of the vertices~$v$ of~$P$ has multiplicity at most~$n$.
Subdividing~$P$ if necessary, we can assume that the preimages $U_v = \pi^{-1}({\rm st}(v)) \subseteq X$ of these open stars have their closures~$\bar{U}_v$ lying in the same balls $B(x,\rho(x))$ covering the compact fibers~$\pi^{-1}(v)$.
Furthermore, the subsets~$U_v$ form a finite covering of~$X$ of multiplicity at most~$n$.

Let $\mathcal{U}=\{ U_i \mid i=1,\dots,m \}$ be a finite covering of~$X$ of multiplicity at most~$n$ by open subsets~$U_i$ with $\bar{U}_i \subseteq B(x_i,\rho(x_i))$ for some $x_i \in X$.
Take a partition of unity~$\{ \phi_i \}$ of~$X$ subordinated to~$\{ U_i \}$.
Consider the map $\pi:X \to \Delta^{m-1}$ defined by
\[
\pi(x)=(\phi_1(x),\dots,\phi_m(x))
\]
in the barycentric coordinates of~$\Delta^{m-1}$.
The nerve~$N$ of the covering~$\mathcal{U}$ is a finite simplicial complex with one vertex~$v_i$ for each open set~$U_i$, where $v_{i_0},\dots,v_{i_k}$ span a $k$-simplex of~$N$ if and only if the intersection $\cap_{j=1}^k U_{i_j}$ is nonempty.
By construction, the dimension of the nerve~$N$ is one less than the multiplicity of the covering~$\mathcal{U}$.
That is, $\dim N \leq n-1$.
Now, we identify in a natural way the vertices~$\{ v_i \}$ of~$N$ with the vertices of~$\Delta^{m-1}$.
With this identification, the nerve~$N$ of~$X$ lies in~$\Delta^{m-1}$.
Furthermore, the image of~$\pi$ lies in~$N$ and every fiber of~$\pi:X \to N$ lies in one of the open subsets~$U_i \subseteq B(x_i,\rho(x_i))$.
Hence, the space~$X$ $\rho$-collapses.
\end{proof}

Let us recall Eilenberg's inequality; see~\cite[13.3.1]{BZ}.

\begin{theorem}[Eilenberg's inequality]
Let $f:X \to Y$ be a Lipschitz map between separable metric space (\ie, compact metric spaces).
Then for every $A \subseteq X$ and every $0 \leq m \leq n$, we have
\begin{equation} \label{eq:eilenberg}
\int_Y^{\mkern-20mu *} \HH_{n-m}(A \cap f^{-1}(y)) \, d\HH_m(y) \leq \frac{v_{n-m} v_m}{v_n} \, ({\rm Lip} \, f)^m \, \HH_n(A)
\end{equation}
where $\int^{\mkern-18mu *}$ denotes the upper Lebesgue integral, $v_n$ is the volume of the unit ball in~$\R^n$ and $\HH_n$ is the $n$-dimensional Hausdorff measure.
\end{theorem}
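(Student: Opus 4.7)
The plan is to reduce Eilenberg's inequality to a pointwise estimate on a single small set, sum over an efficient cover of~$A$, and then pass to the limit in the mesh. The two key ingredients are the isodiametric inequality, which controls the Hausdorff measure of a set by its diameter, and the countable subadditivity of the upper Lebesgue integral on nonnegative integrands.

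First, I would prove the following per-piece bound. Fix any set $B \subseteq X$ of diameter~$d$ and write $L = {\rm Lip} \, f$. Since $f(B)$ has diameter at most~$Ld$, the isodiametric inequality gives $\HH_m(f(B)) \leq v_m (Ld/2)^m$. Each fiber $B \cap f^{-1}(y)$ has diameter at most~$d$, so $\HH_{n-m}(B \cap f^{-1}(y)) \leq v_{n-m}(d/2)^{n-m}$, and this fiber measure vanishes for $y \notin f(B)$. Multiplying the two estimates and rearranging yields
\[
\int_Y^{\mkern-20mu *} \HH_{n-m}(B \cap f^{-1}(y)) \, d\HH_m(y) \leq v_{n-m}(d/2)^{n-m} \cdot v_m (Ld/2)^m = \frac{v_{n-m} v_m}{v_n} \, L^m \cdot v_n (d/2)^n.
\]

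Second, I would globalize via covers. Given $\delta > 0$, choose a countable cover $\{B_i\}$ of~$A$ with $\diam B_i \leq \delta$ such that $\sum_i v_n (\diam B_i/2)^n$ is within $\eps$ of the $\delta$-Hausdorff premeasure $\HH_n^\delta(A)$. By fiberwise subadditivity of $\HH_{n-m}$ and countable subadditivity of the upper integral on nonnegative functions,
\[
\int_Y^{\mkern-20mu *} \HH_{n-m}(A \cap f^{-1}(y)) \, d\HH_m(y) \leq \sum_i \int_Y^{\mkern-20mu *} \HH_{n-m}(B_i \cap f^{-1}(y)) \, d\HH_m(y).
\]
Applying the per-piece estimate to each $B_i$ and summing bounds the right side by $\frac{v_{n-m} v_m}{v_n} L^m (\HH_n^\delta(A) + \eps)$. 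Letting $\eps \to 0$ and then $\delta \to 0$ delivers the desired inequality.

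The delicate point, and what I expect to be the main obstacle, is that $y \mapsto \HH_{n-m}(A \cap f^{-1}(y))$ need not be Borel measurable, which is precisely why the upper integral~$\int^{*}$ appears in the statement. One must therefore verify carefully that $\int^{*}$ commutes with countable sums of nonnegative integrands, by unwinding the definition of $\int^{*}$ as an infimum over integrals of measurable majorants and appealing to monotone convergence, rather than attempting a direct Fubini-type argument.
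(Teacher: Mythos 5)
The paper does not prove this statement: Eilenberg's inequality is quoted as a known result from Burago--Zalgaller \cite{BZ}, so there is no in-paper argument to compare against. Your proposed proof, however, contains a genuine gap in the per-piece estimate. For a set $B$ of diameter $d$ you invoke $\HH_{n-m}(B\cap f^{-1}(y)) \leq v_{n-m}(d/2)^{n-m}$, and similarly $\HH_m(f(B)) \leq v_m(Ld/2)^m$. Neither inequality holds for the \emph{unrestricted} Hausdorff measure. Covering a set $E$ by itself only controls the Hausdorff \emph{premeasure}: one gets $\HH_k^\rho(E) \leq v_k(\diam E/2)^k$ for $\rho\geq \diam E$, whereas $\HH_k(E)=\sup_{\rho>0}\HH_k^\rho(E)$ can be arbitrarily large, even infinite, for a set of tiny diameter --- a long, wiggly curve packed into a small ball already has $\HH_1$ far exceeding $v_1\,\diam/2$, and the analogous effect occurs in every dimension $k$. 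The Euclidean isodiametric inequality does not rescue this, since $X$ and $Y$ are arbitrary metric spaces (and even when $X\subseteq\R^n$, the fiber of a small piece of a crumpled surface is exactly such a long curve). So the per-piece bound is false, and summing false per-piece bounds cannot establish the theorem.

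This is precisely what keeps Eilenberg's inequality from being a three-line cover argument. The correct route carries the premeasure at the mesh scale of the cover through the whole computation: a $\delta$-cover $\{B_i\}$ of $A$ induces, for each $y$, a $\delta$-cover $\{B_i\cap f^{-1}(y)\}$ of the fiber, whence one has the pointwise bound $\HH_{n-m}^{\delta}(A\cap f^{-1}(y))\leq \sum_i v_{n-m}(\diam B_i/2)^{n-m}\,\chi_{f(B_i)}(y)$, with $\HH_{n-m}^{\delta}$ and not $\HH_{n-m}$ on the left. One integrates this, estimates the image factors at the matching scale, and only at the very end lets $\delta\to 0$, exploiting $\HH_{n-m}^{\delta}\uparrow\HH_{n-m}$ and a Fatou-type lemma for the upper integral. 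Your caution about commuting $\int^*$ with countable sums is the right instinct for that final passage, but the more fundamental defect is the substitution of $\HH_{n-m}$ for $\HH_{n-m}^{\delta}$ in the per-piece step.
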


The following result is the equivalent to~\cite[Lemma~2.1]{nab} in our language.

\begin{lemma} \label{lem:n=1}
Let $X$ be a compact $1$-polyhedral length space and $\rho:X \to (0,\infty)$ be a function.
Suppose that for every $x \in X$, there exists a ball $B(x,r_x) \subseteq X$ of radius $r_x \in [0,\rho(x)]$ such that $\HH_1(B(x,r_x)) < r_x$.
Then the space~$X$ $\rho$-collapses.
\end{lemma}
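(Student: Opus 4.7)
\emph{Proof plan.} Since $n = 1$, the target of a $\rho$-collapse is a $0$-complex, so its fibers form a finite clopen partition of~$X$. My strategy is to cut $X$ along well-chosen level sets of distance functions to produce such a partition with each piece contained in a ball of the form $B(x,\rho(x))$.

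Fix $x \in X$ with associated radius $r_x \in (0,\rho(x)]$ satisfying $\HH_1(B(x,r_x)) < r_x$. I apply Eilenberg's inequality~\eqref{eq:eilenberg} to the $1$-Lipschitz distance function $d_x := d(x,\cdot) : X \to \R$ with $A = B(x,r_x)$ and $n = m = 1$; since $v_0 = 1$ and $d_x^{-1}(t) \subseteq B(x,r_x)$ whenever $t \in (0,r_x)$, this gives
\[
\int_0^{r_x} \HH_0\bigl(d_x^{-1}(t)\bigr) \, dt \;\leq\; \HH_1(B(x,r_x)) \;<\; r_x.
\]
The integrand takes values in $\Z_{\geq 0} \cup \{\infty\}$, so the set $\{t \in (0,r_x) : d_x^{-1}(t) = \emptyset\}$ has positive Lebesgue measure; pick any $t_x$ in it. Then $V_x := B(x,t_x)$ is clopen in~$X$: it is open by definition and closed because its complement $\{y : d(x,y) > t_x\}$ is open (no separating points occur at level $t_x$). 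By construction, $V_x \subseteq B(x,\rho(x))$.

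By compactness, extract a finite subcover $V_{x_1},\dots,V_{x_k}$ of the cover $\{V_x\}_{x \in X}$ and disjointify by setting $W_i := V_{x_i} \setminus \bigcup_{j<i} V_{x_j}$. Each $W_i$ is a finite Boolean combination of clopen sets, hence clopen; the family $\{W_i\}$ partitions~$X$; and $W_i \subseteq V_{x_i} \subseteq B(x_i,\rho(x_i))$. Define $\pi : X \to P := \{p_1,\dots,p_k\}$ by $\pi \equiv p_i$ on $W_i$; continuity follows from clopenness of the $W_i$, and the fiber condition of Definition~\ref{def:collapse} holds by construction, so $X$ $\rho$-collapses.

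The main subtle step is the extraction of a \emph{genuinely empty} sphere from the strict length inequality $\HH_1(B(x,r_x)) < r_x$; this relies on the integer-valuedness of $\HH_0$ on the almost-everywhere finite level sets furnished by Eilenberg's inequality, together with the strict (rather than non-strict) inequality in the hypothesis. Once such a $t_x$ is in hand, everything else is routine clopen bookkeeping in dimension~$1$.
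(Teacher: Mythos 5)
Your proof is correct and follows essentially the same path as the paper's: apply Eilenberg's inequality with $m=n=1$ to the distance function $d(x,\cdot)$ restricted to $B(x,r_x)$, use the strict inequality to find an empty sphere $S(x,t_x)$ with $t_x \in (0,r_x)$, and conclude that $X$ splits into clopen pieces each contained in a ball $B(x,\rho(x))$, from which a map to a finite $0$-complex is immediate. The paper phrases the last step in terms of connected components rather than an explicit disjointification of clopen balls, but this is only a cosmetic difference.
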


\begin{proof}
By the Eilenberg inequality~\eqref{eq:eilenberg} with $m=n=1$, for every $x \in X$, there exists $\tau_x \in (0,r_x)$ such that the sphere $S(x,\tau_x) \subseteq X$ is empty.
Thus, every connected component of~$X$ lies in a ball $B(x,\rho(x))$ for some $x \in X$.
\end{proof}

The proof of Theorem~\ref{theo:x_0} rests on the following notion, which generalizes the definition given in~\cite[Definition~2.3]{nab}; see also~\cite{P}.

\begin{definition}
Let $X$ be a compact $n$-polyhedral length space and $\rho:X \to (0,\infty)$ be a bounded function.
A compact $(n-1)$-subcomplex~$Y$ of~$X$ \emph{$\rho$-separates}~$X$ if for every connected component~$C$ of~$X \setminus Y$, there exists $x \in X$ (depending on~$C$) such that $\bar{C} \subseteq B_X(x,\rho(x))$.
\end{definition}

The following result is a mere adaptation of~\cite[Lemma~2.5]{nab}.

\begin{lemma} \label{lem:XY}
Let $X$ be a compact $n$-polyhedral length space and $\rho:X \to (0,\infty)$ be a function.
Let $Y$ be a compact $(n-1)$-subpolyhedron of~$X$ which $\rho$-separates~$X$.
Denote by $\rho_Y$ the restriction of~$\rho$ to~$Y$.
Suppose that the space~$Y$ $\rho_Y$-collapses.
Then the space~$X$ $\rho$-collapses.
\end{lemma}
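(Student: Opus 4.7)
The plan is to invoke Proposition~\ref{prop:mult} in both directions: it converts the hypothesis on $Y$ into a good open covering of $Y$, and it reduces the conclusion about $X$ to the construction of a good open covering of $X$. So applying the proposition to $(Y,\rho_Y)$, I first obtain a finite open cover $\{V_j\}_{j=1}^m$ of $Y$ of multiplicity at most $n-1$ whose closures satisfy $\bar V_j \subseteq B_X(y_j,\rho(y_j))$ for some $y_j \in X$. The target is to produce a finite open cover of $X$ of multiplicity at most $n$ by sets of the same form.

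The natural idea is to thicken each $V_j$ slightly into $X$ and to use one extra open set per connected component of $X\setminus Y$ (of which there are finitely many, since $Y$ is a subpolyhedron of a compact complex). After subdividing so that $Y$ is a subcomplex, the polyhedral structure provides a regular neighborhood $N$ of $Y$ in $X$ together with a deformation retraction $r\colon N\to Y$ whose displacement $\sup_{x\in N} d_X(x,r(x))$ can be made arbitrarily small. Setting $\tilde V_j := r^{-1}(V_j)$ yields an open cover of $N$ whose nerve is the same as that of $\{V_j\}$, so its multiplicity is still at most $n-1$. Using compactness of each $\bar V_j$ and the positivity of the gap $\rho(y_j)-\sup_{\bar V_j} d_X(\cdot,y_j)$, one chooses the displacement of $r$ small enough that $\bar{\tilde V}_j \subseteq B_X(y_j,\rho(y_j))$ for every $j$.

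For the components of $X\setminus Y$, the $\rho$-separation hypothesis gives $\bar C_\alpha \subseteq B_X(x_\alpha,\rho(x_\alpha))$. Pick a slightly thinner open neighborhood $N'$ of $Y$ with $\overline{N'}\subseteq N$, and set $U_\alpha := C_\alpha \setminus \overline{N'}$. The $U_\alpha$ are pairwise disjoint open subsets of $X$ with $\bar U_\alpha \subseteq \bar C_\alpha \subseteq B_X(x_\alpha,\rho(x_\alpha))$. The collection $\{\tilde V_j\}_j \cup \{U_\alpha\}_\alpha$ then covers $X$: the $\tilde V_j$'s cover $N$, and the $U_\alpha$'s cover $X\setminus\overline{N'}$. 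Its multiplicity is at most $n$, because the $U_\alpha$ are pairwise disjoint and each point of $X$ belongs to at most $n-1$ of the $\tilde V_j$'s plus at most one $U_\alpha$. Proposition~\ref{prop:mult} then concludes that $X$ $\rho$-collapses.

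I expect the main obstacle to be the construction in the middle paragraph: producing a regular neighborhood $N$ of $Y$ in $X$ endowed with a deformation retraction of arbitrarily small metric displacement. The existence of such a neighborhood is standard in piecewise-linear topology after subdivision, but some care is required because the metric on $X$ is the intrinsic length metric rather than a purely combinatorial datum, and one must simultaneously achieve a uniform displacement bound across the finitely many balls $B_X(y_j,\rho(y_j))$. The fact that $\rho$ is not assumed continuous is harmless, since $\rho$ is evaluated only at the finitely many preassigned centers $y_j$ and $x_\alpha$.
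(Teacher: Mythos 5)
Your argument is correct and follows essentially the same route as the paper: apply Proposition~\ref{prop:mult} to $Y$, thicken the resulting cover slightly into $X$ without increasing its multiplicity, adjoin the (pairwise disjoint) components of $X\setminus Y$, and apply Proposition~\ref{prop:mult} again. The paper simply adjoins each full component $C_\alpha$ rather than trimming it by $\overline{N'}$, and asserts the thickening step without spelling out the regular-neighborhood retraction that you use to justify it; both of these are minor variations rather than a different approach.
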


\begin{proof}
We follow the argument presented in~\cite[Lemma~2.5]{nab}. % attributed to R.~Karasev.
Recall that the polyhedral space~$Y$ is endowed with the length structure induced from~$X$.
Let $\mathcal{U}=\{U_i\}$ be a finite covering of~$Y$ of multiplicity at most~$n-1$ by open subsets~$U_i$ of~$Y$ with $\bar{U}_i \subseteq B_Y(y_i,\rho(y_i))$ for some $y_i \in Y$.
We can thicken every subset~$U_i$ into an open subset~$V_i$ of~$X$ with $\bar{V}_i \subseteq B_Y(y_i,\rho(y_i))$, without increasing the multiplicity.
(Note that $B_Y(y,r) \subseteq B_X(y,r)$ for every $y \in Y$.)
Adding the connected components of~$X \setminus Y$ to the family~$\{ V_i\}$, we obtain a covering of~$X$ of multiplicity at most~$n$ (one more than the multiplicity of~$\mathcal{U}$) by open subsets of~$X$ whose closures lies in balls $B_X(x,\rho(x))$ with $x \in X$.
Hence, the space~$X$ $\rho$-collapses.
\end{proof}

Denote by $\lambda_n = \frac{v_n}{2 v_{n-1}}$ the coefficient in Eilenberg's inequality with $m=1$; see~\eqref{eq:eilenberg}.
Note that $\lambda_1=1$.

\begin{theorem} \label{theo:x_0}
Let $X$ be a compact $n$-polyhedral length space and $\rho:X \to (0,\infty)$ be a continuous function.
Suppose that the space~$X$ does not $\rho$-collapse.
Then there exists $x_0 \in X$ such that
\[
\HH_n(B_X(x_0,r)) \geq c_n \, r^n
\]
for every $r \in [0,\rho(x_0)]$, where $c_n = \frac{1}{n!} \, \prod_{i=1}^n \lambda_i$.
\end{theorem}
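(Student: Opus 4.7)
The plan is to argue by induction on~$n$. For the base case $n=1$, the result is the contrapositive of Lemma~\ref{lem:n=1} with $c_1=\lambda_1=1$: if no point $x_0$ satisfies $\HH_1(B(x_0,r)) \geq r$ for every $r \in [0,\rho(x_0)]$, then every point admits some ball with $\HH_1(B(x,r))<r$, and Lemma~\ref{lem:n=1} forces $X$ to $\rho$-collapse.

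For the inductive step, I would assume the theorem in dimension $n-1$ and argue by contradiction: suppose $X$ does not $\rho$-collapse but that for every $x_0\in X$ there exists $r\in(0,\rho(x_0)]$ with $\HH_n(B(x_0,r))<c_n r^n$. The strategy is to produce a compact $(n-1)$-subpolyhedron $Y\subseteq X$ which $\rho$-separates $X$ (in the sense preceding Lemma~\ref{lem:XY}) and which itself $\rho_Y$-collapses. Once both properties are established, Lemma~\ref{lem:XY} forces $X$ to $\rho$-collapse, yielding the desired contradiction.

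To construct~$Y$, for each $x\in X$ I select a radius $r_x\in(0,\rho(x)]$ witnessing the smallness assumption and apply Eilenberg's inequality~\eqref{eq:eilenberg} with $m=1$ and $f=d_X(x,\cdot)$ on $A=B(x,r_x)$, obtaining $\int_0^{r_x}\HH_{n-1}(S(x,t))\,dt \leq \lambda_n\,\HH_n(B(x,r_x)) < \lambda_n c_n r_x^n$. A coarea/pigeonhole argument, combined with the identity $c_n=\frac{\lambda_n}{n}c_{n-1}$, then produces a radius $\tau_x\in(0,r_x)$ for which $S(x,\tau_x)$ has suitably small $(n-1)$-measure relative to $\tau_x^{n-1}$. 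By compactness of $X$ and continuity of $\rho$, I extract a finite subcover by open balls $B(x_i,\tau_i)$ with $\tau_i=\tau_{x_i}$, and set $Y=\bigcup_i S(x_i,\tau_i)$; after standard polyhedral adjustment this is a compact $(n-1)$-subpolyhedron, and every connected component of $X\setminus Y$ is contained in some ball $B(x_i,\tau_i)\subseteq B(x_i,\rho(x_i))$, so $Y$ $\rho$-separates $X$.

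The main obstacle is verifying that $Y$ itself satisfies the smallness hypothesis in dimension~$n-1$ with respect to $\rho_Y$, namely that every $y\in Y$ admits $s\in(0,\rho(y)]$ with $\HH_{n-1}(B_Y(y,s))<c_{n-1}s^{n-1}$, so that the inductive hypothesis (contrapositively) yields that $Y$ $\rho_Y$-collapses. Since $B_Y(y,s)\subseteq B_X(y,s)\cap Y = \bigcup_i (B_X(y,s)\cap S(x_i,\tau_i))$, the issue is to bound the $(n-1)$-measure of these local cross-sections of the separating spheres near $y$. The plan is to apply Eilenberg's inequality a second time, now to $f=d_X(y,\cdot)$ on an appropriate small ball at $y$, exploiting the smallness hypothesis already verified at $y\in X$, and to choose the $\tau_x$ as in Nabutovsky's approach~\cite{nab} so that the constants match through the recursion $c_n=\frac{\lambda_n}{n}c_{n-1}$. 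Once this technical propagation of smallness from $X$ to~$Y$ is in place, the inductive hypothesis applied to~$Y$ gives that $Y$ $\rho_Y$-collapses; Lemma~\ref{lem:XY} then shows $X$ $\rho$-collapses, providing the sought contradiction.
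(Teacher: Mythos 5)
Your induction scheme is set up in the right spirit, and the base case is fine, but the inductive step has a genuine gap that the paper's near-minimality trick is precisely designed to avoid.

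You construct the separating polyhedron $Y$ \emph{explicitly}, as the union of spheres $S(x_i,\tau_i)$ chosen via Eilenberg's inequality. That construction does make $Y$ $\rho$-separate $X$, but you then need to verify the $(n-1)$-dimensional smallness hypothesis \emph{at every point $y\in Y$}, i.e.\ a bound on $\HH_{n-1}(B_Y(y,s))=\HH_{n-1}(Y\cap B_X(y,s))$. There is no mechanism in your argument to control this quantity. The Eilenberg application at each $x_i$ bounds $\HH_{n-1}(S(x_i,\tau_i))$ globally, not its intersection with a small ball around $y$; a second Eilenberg application at $y$ would control spheres centered at $y$, i.e.\ $\HH_{n-1}(S_X(y,t))$, which is a completely different object from $\HH_{n-1}(Y\cap B_X(y,s))$. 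Worse, near a given $y$ arbitrarily many of the spheres $S(x_i,\tau_i)$ can accumulate, and nothing in the hypothesis ``$\HH_n(B(x,r))<c_n r^n$ for some $r$'' prevents $Y$ from being very heavy locally. This is exactly the step you flag as ``the main obstacle,'' and it is not a technicality to be smoothed over: the propagation of smallness from $X$ down to a \emph{prescribed} $Y$ simply does not hold.

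The paper circumvents this by never constructing $Y$ explicitly. Instead, $Y$ is chosen to be an (approximate) \emph{minimizer} of $(n-1)$-volume among all $\rho$-separating subpolyhedra. Non-collapse is then pushed down a dimension directly: Lemma~\ref{lem:XY}, read contrapositively, says that if $X$ does not $\rho$-collapse then any $\rho$-separating $Y$ does not $\rho_Y$-collapse; one applies the inductive hypothesis to $Y$ to obtain a point $y_0$ with good $(n-1)$-volume growth of balls \emph{in $Y$}. Minimality is then used in a sphere-replacement comparison: replacing $Y\cap B_X(y_0,s)$ by $S_X(y_0,s)$ still gives a $\rho$-separating set, so near-minimality forces $\HH_{n-1}(S_X(y_0,s))\gtrsim\HH_{n-1}(B_Y(y_0,s))$, which the inductive bound controls from below. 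Integrating in $s$ via Eilenberg yields the $n$-dimensional estimate. So the data flow is opposite to yours: you try to propagate the \emph{failure} of the volume bound downward, while the paper propagates the \emph{non-collapse} downward and uses the minimizer to convert the lower bound on $\HH_{n-1}(B_Y(y_0,s))$ into a lower bound on sphere volumes $\HH_{n-1}(S_X(y_0,s))$. Without a minimizing $Y$, there is no such conversion and your argument cannot close.

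One additional remark: even granting some control on $Y$, you would still need the careful $\varepsilon$--$\alpha$ bookkeeping that appears in the paper (the intermediate statement~\eqref{eq:lowerX} with a restricted range $r\in[\alpha\rho(x_0),\rho(x_0)]$ and the final limiting argument using a sequence $\varepsilon_m\to 0$ and compactness of $X$). Your recursion $c_n=\tfrac{\lambda_n}{n}c_{n-1}$ is the right shape, but the error terms from the near-minimizer and from integrating only down to a positive lower radius must be tracked explicitly, which is what the $\varepsilon'$, $\alpha'$, $\delta$ choices accomplish in the paper.
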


\begin{proof}
%Fix $\varepsilon >0$ small enough so that every ball of radius~$r \leq \varepsilon$ in~$X$ has volume at least $\frac{\omega_n}{2} r^n$ and ???.
Let $\eta>0$ be small enough.
We can $\eta$-approximate the distance function~$d_X(x_0,\cdot)$ to~$x_0$ by a piecewise smooth (or even piecewise linear, after taking sufficiently fine subdivisions) $(1+\eta)$-Lipschitz function whose level sets are $(n-1)$-subpolyhedra.
To avoid burdening the argument with standard approximations, we will assume that the spheres~$S_X(x_0,r)$ are $(n-1)$-subpolyhedra, keeping in mind that the inequalities below, including the Eilenberg inequality, only hold with an extra $1+o(\eta)$ factor with $\eta>0$ arbitrarily small.
Letting $\eta$ go to zero, this will not change the final estimate.

\medskip

%Let us argue by induction on the dimension~$n$ of~$X$.
Let us show by induction on the dimension~$n$ of~$X$ that for every $\varepsilon \in (0,1)$ and every $\alpha \in (0,1)$, there exists $x_0 \in X$ such that
\begin{equation} \label{eq:lowerX}
\HH_n(B_X(x_0,r)) \geq \frac{1-\varepsilon}{n!} \left( {\textstyle \prod\limits_{i=1}^{n}} \lambda_i \right) r^n
\end{equation}
for every $r \in [\alpha  \, \rho(x_0),\rho(x_0)]$.
The case~$n=1$ with $c_1=1$ is covered by Lemma~\ref{lem:n=1}.

In the general case~$n \geq 2$, fix $\varepsilon \in (0,1)$ and $\alpha \in (0,1)$.
Denote $\varepsilon'=\frac{\varepsilon}{4}$ and $\alpha' = (\frac{\varepsilon}{2})^{\frac{1}{n}} \, \alpha$.
Consider a compact $(n-1)$-subcomplex~$Y \subseteq X$ which $\rho$-separates~$X$ and has minimal volume up to~$\delta$, where $\delta = \frac{\varepsilon'}{(n-1)!} \, \left( \prod_{i=1}^n \lambda_i \right) \, \alpha'^{n-1} \, (\inf \rho)^{n-1} >0$.
By Lemma~\ref{lem:XY}, the space~$Y$ does not $\rho_Y$-collapse.
Thus, by induction (replacing $\varepsilon$ with~$\varepsilon'$ and $\alpha$ with~$\alpha'$), there exists $y_0 \in Y$ such that
\begin{equation} \label{eq:volY}
\HH_{n-1}(B_Y(y_0,s)) \geq \frac{1-\varepsilon'}{(n-1)!}  \left( {\textstyle \prod\limits_{i=1}^{n-1}} \lambda_i \right)  s^{n-1}
\end{equation}
for every $s \in [\alpha'  \, \rho(y_0),\rho(y_0)]$.

Denote $x_0:=y_0$ and let $s \in [\alpha'  \rho(x_0),\rho(x_0))$.
Consider the compact $(n-1)$-subcomplex $Z \subseteq X$ obtained from~$Y$ by replacing $Y \cap B_X(y_0,s)$ with the sphere~$S_X(y_0,s)$.
The resulting space~$Z$ $\rho$-separates~$X$.
Indeed, the closure of the connected component~$B_X(x_0,s)$ of~$X \setminus Z$ is clearly covered by the ball $B_X(x_0,\rho(x_0))$.
Furthermore, all the other connected components of~$X \setminus Z$ are contained in connected components of~$X \setminus Y$ and therefore their closures also lie in balls $B_X(x,\rho(x))$ with $x \in X$.

Now, since $Y$ has minimal volume up to~$\delta$ among all compact $(n-1)$-subpolyhedra that $\rho$-separate~$X$, we derive
\[
\HH_{n-1}(Z) \geq \HH_{n-1}(Y) - \delta.
\]
This implies
\begin{align*}
\HH_{n-1}(S_X(x_0,s)) & \geq \HH_{n-1}(Y \cap B_X(y_0,s)) - \delta \\
 & \geq \HH_{n-1}(B_Y(y_0,s)) - \delta \\
 & \geq \frac{1-\varepsilon'}{(n-1)!}  \left( {\textstyle \prod\limits_{i=1}^{n-1}} \lambda_i \right)  s^{n-1} - \frac{\varepsilon'}{(n-1)!}  \left( {\textstyle \prod\limits_{i=1}^{n-1}} \lambda_i \right)  s^{n-1} \\
 & \geq \frac{1- 2 \varepsilon'}{(n-1)!}  \left( {\textstyle \prod\limits_{i=1}^{n-1}} \lambda_i \right)  s^{n-1}.
\end{align*}
where the second inequality follows from the inclusion $B_Y(y_0,s) \subseteq Y \cap B_X(y_0,s)$ and the third inequality follows from~\eqref{eq:volY} and the bound $s \geq \alpha' (\inf \rho)$.

Thus, by the Eilenberg inequality~\eqref{eq:eilenberg}, we obtain
\[
\HH_n(B_X(x_0,r)) \geq \lambda_n \, \int^{r \mkern-28mu *}_{\alpha'  \rho(x_0)} \HH_{n-1}(S_X(x_0,s)) \, ds \geq \frac{1- 2 \varepsilon'}{n!}  \left( {\textstyle \prod\limits_{i=1}^{n}} \lambda_i \right)  \left( r^n - \alpha'^n \, \rho(x_0)^n \right)
\]
for every $r \in [\alpha  \, \rho(x_0),\rho(x_0)]$. 
(Note that $r \geq \alpha' \rho(x_0)$ since $\alpha' \leq \alpha$.)

We want this expression to be greater or equal to~$\frac{1-\varepsilon}{n!} \left( \prod_{i=1}^{n} \lambda_i \right) r^n$ for every $r \in [\alpha  \, \rho(x_0),\rho(x_0)]$.
That is, we want
\[
(\varepsilon - 2 \varepsilon') r^n \geq \alpha'^n \rho(x_0)^n.
\]
Since $r \geq \alpha  \, \rho(x_0)$, it is enough to have
\[
(\varepsilon - 2 \varepsilon') \alpha^n \geq \alpha'^n.
\]
Replacing $\varepsilon'$ and~$\alpha'$ with their expressions in terms of~$\varepsilon$ and~$\alpha$, we observe that this inequality is satisfied, and in fact, is an equality.
Hence,
\begin{equation*} %\label{eq:lower}
\HH_n(B_X(x_0,r)) \geq \frac{1-\varepsilon}{n!} \left( {\textstyle \prod\limits_{i=1}^{n}} \lambda_i \right)  r^n
\end{equation*}
for every $r \in [\alpha  \, \rho(x_0),\rho(x_0)]$.

\medskip

Take a decreasing sequence~$(\varepsilon_m)$ converging to zero and fix $\alpha=\varepsilon_m$.
There exists $x_m \in X$ such that the $n$-dimensional Hausdorff measure of~$B_X(x_m,s)$ satisfies a similar lower bound to~\eqref{eq:lowerX} for every $s \in [\varepsilon_m \, \rho(x_m), \rho(x_m)]$ with $\varepsilon=\varepsilon_m$.
By compactness of~$X$, we can assume that $(x_m)$ converges to a point~$x_0 \in X$.
Also, by compactness of~$X$ and continuity of~$\rho$, the function~$\rho$ is bounded.
Let $r \in [0,\rho(x_0)]$.
By the triangle inequality, we have 
\[
B_X(x_0,r) \supseteq B_X(x_m,r-|x_0 x_m|).
\]
Thus, for $m$ large enough, we obtain
\[
\HH_n(B_X(x_0,r)) \geq \frac{1-\varepsilon_m}{n!} \left( {\textstyle \prod\limits_{i=1}^{n}} \lambda_i \right) \min\{ r-|x_0 x_m| ,\rho(x_m) \}^n.
\]
Since $|x_0 x_m|$ goes to zero and $\rho$ is continuous, we obtain the desired lower bound for~$\HH_n(B_X(x_0,r))$.
\end{proof}

%\begin{remark}
%The lower semicontinuity of~$\rho$ was required only in the last step of the proof.
%In particular, the lower bound~\eqref{eq:lowerX} holds even if $\rho$ is not lower semicontinuous.
%\end{remark}

\begin{remark}
If $X$ is a compact $n$-polyhderon with a piecewise Riemannian metric (\eg, a closed Riemannian $n$-manifold), we can apply the coarea formula, see~\cite[13.4.2]{BZ}, instead of Eilenberg's inequality in the proof of Theorem~\ref{theo:x_0}.
In this case, we can get rid of the product $\prod_{i=1}^n \lambda_i$ in the expression of~$c_n$.
\end{remark}

\begin{remark}
The lower bound~\eqref{eq:lowerX} holds even if $\rho$ is not continuous, provided it is bounded away from zero.
\end{remark}

Let us introduce a couple of definitions: one of topological nature and the other one of geometrical nature.

\begin{definition}
Let $X$ be a connected compact $n$-polyhedral space and $\phi:\pi_1(X) \to G$ be a group homomorphism to a discrete group~$G$.
The space~$X$ is \emph{$\phi$-essential} if the classifying map $\Phi:M \to K(G,1)$ induced by~$\phi$ is not homotopic to a continuous map $X \to P \to K(G,1)$ which factors out through a simplicial $(n-1)$-complex~$P$.
\end{definition}

\begin{definition}
Let $X$ be a connected compact $n$-polyhedral length space and $\phi:\pi_1(X) \to G$ be a nontrivial group homomorphism to a discrete group~$G$.
The \emph{$\phi$-systolic function} of~$X$ is the function $\sys_\phi:X \to (0,\infty)$ defined as 
\[
\sys_\phi(X,x) = \inf \{ \length(\gamma) \mid \gamma \mbox{ is a noncontractible loop of } X \mbox{ based at } x \}.
\]
Clearly, the $\phi$-systolic function is continuous and even $2$-Lipschitz.
\end{definition}

We have the following non-collapsing result.

\begin{proposition} \label{prop:sys}
Let $X$ be a $\phi$-essential connected compact $n$-polyhedral length space, where $\phi:\pi_1(X) \to G$ is a nontrivial group homomorphism to a discrete group~$G$.
Then the space~$X$ does not $\frac{1}{2} \sys_\phi$-collapse.
\end{proposition}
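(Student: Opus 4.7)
The plan is to argue by contrapositive: suppose $X$ does $\frac{1}{2}\sys_\phi$-collapse, and construct a factorization of the classifying map $\Phi:X\to K(G,1)$ through a simplicial complex of dimension at most $n-1$, contradicting $\phi$-essentiality. By Proposition~\ref{prop:mult}, the collapse yields a finite open covering $\{U_i\}_{i=1}^m$ of $X$ of multiplicity at most $n$ with $\bar U_i\subseteq B(x_i,\rho_i)$, where $\rho_i:=\frac{1}{2}\sys_\phi(x_i)$. Let $N$ denote the nerve of this covering, so that $\dim N\leq n-1$, and let $p:X\to N$ be the canonical nerve map induced by a subordinate partition of unity. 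The aim is then to build a map $\Psi:N\to K(G,1)$ with $\Psi\circ p\simeq\Phi$.

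The linchpin is the following \emph{ball lemma}: every loop contained in the ball $B(x_i,\rho_i)$, viewed as a loop in $X$, is null-homotopic in $X$ (and in particular lies in $\ker\phi$). To prove it, I would pass to the universal cover $\tilde X\to X$, fix a lift $\tilde x_i$ of $x_i$, and verify that the covering projection restricts to a homeomorphism $B(\tilde x_i,\rho_i)\xrightarrow{\sim}B(x_i,\rho_i)$. Injectivity uses the definition of $\sys_\phi$: two distinct lifts of a point $y\in B(x_i,\rho_i)$ would yield two paths from $x_i$ to $y$ of length $<\rho_i$ and hence a loop at $x_i$ of length $<2\rho_i=\sys_\phi(x_i)$, which must be contractible, forcing the two lifts to coincide. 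Consequently any loop in $B(x_i,\rho_i)$ lifts to a closed loop in $\tilde X$ and is therefore null-homotopic in $X$.

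With the ball lemma in hand, $\Psi$ is built skeleton by skeleton using the standard nerve machinery: fix a basepoint $p_0\in X$, points $y_i\in U_i$, paths $\alpha_i$ from $p_0$ to $y_i$, intersection points $z_{ij}\in U_i\cap U_j$, and connecting sub-paths within $\bar U_i$ and $\bar U_j$. Since $\pi_k(K(G,1))=0$ for $k\geq 2$, the only genuine obstructions occur at the $2$-cells: for each $2$-simplex $[v_i,v_j,v_k]$ of $N$, the boundary loop at $p_0$ must be $\phi$-trivial. This is the \textbf{main obstacle}: although the ball lemma handles loops inside a single $\bar U_\ell$, the $2$-cell boundary a priori lives in the union $\bar U_i\cup\bar U_j\cup\bar U_k$. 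The way out is to exploit a point $z_{ijk}\in U_i\cap U_j\cap U_k$ (nonempty since the $2$-simplex lies in the nerve) and to insert synchronizing paths from the $y_\ell$'s and the $z_{mn}$'s to $z_{ijk}$, each chosen inside the appropriate $\bar U_\ell$. This decomposes the $2$-cell boundary into a product of loops each confined to a single $\bar U_\ell$, the consistency of the decomposition being verified by tracking the corresponding lifts in $\tilde X$. By the ball lemma each such piece is null-homotopic and hence $\phi$-trivial; the $2$-cell obstruction therefore vanishes, $\Psi$ extends over all of $N$, and the resulting factorization $\Phi\simeq\Psi\circ p$ contradicts $\phi$-essentiality.
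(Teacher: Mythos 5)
The overall architecture of your proposal — invoke Proposition~\ref{prop:mult} to extract a multiplicity-$\leq n$ covering $\{U_i\}$, show that each $\pi_1(U_i)$ dies under $\phi$, and use that to factor the classifying map through the nerve $N$ with $\dim N\leq n-1$ — is precisely the paper's strategy. The issue is in your ball lemma and its proof.

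The function $\sys_\phi$ is not the ordinary systole. The notation and the role it plays require $\sys_\phi(x)$ to be the infimum of lengths of loops at $x$ whose $\phi$-image is \emph{nontrivial}, i.e.\ that do not lie in $\ker\phi$ (the word ``noncontractible'' in the definition is shorthand for this; otherwise the subscript $\phi$ is vacuous and the statement degenerates). Under this reading your ball lemma is false: a loop of length $<\sys_\phi(x_i)$ at $x_i$ need not be contractible in $X$, only $\phi$-trivial. Correspondingly, the universal covering projection $\tilde X\to X$ need not be injective on $B(\tilde x_i,\rho_i)$, because two lifts of the same $y$ produce a short loop at $x_i$ that you can only conclude lies in $\ker\phi$ — which does not force the lifts in $\tilde X$ to coincide. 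The fix is to run your exact argument in the cover $\hat X\to X$ corresponding to $\ker\phi$ instead of the universal cover: injectivity on a $\rho_i$-ball around a lift $\hat x_i$ follows because a short non-closed lift would exhibit a loop at $x_i$ of length $<\sys_\phi(x_i)$ whose $\phi$-image is nontrivial. The correct conclusion of the ball lemma is then the weaker (but sufficient) statement that every loop in $\bar U_i$ has class in $\ker\phi$; equivalently, $\pi_1(U_i)\to\pi_1(X)\to G$ is trivial, and $\hat X\to X$ trivializes over each $U_i$. You should also make the same substitution later where you ``track lifts in $\tilde X$'' to kill the $2$-cell obstruction.

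Once $\tilde X$ is replaced by $\hat X$, your proof goes through. The paper organizes the final factorization differently and somewhat more cleanly: it forms the nerve $\hat N$ of the lifted covering $\{\alpha\cdot\hat U_i\}$ of $\hat X$, uses the free action of $H=\pi_1(X)/\ker\phi$ to relate $\hat N$ to $N$, constructs an equivariant map $\hat N\to\tilde K$ to the universal cover of $K(G,1)$, and lets it descend to $N\to K$. This packages the $2$-cell bookkeeping (your $z_{ijk}$ synchronization) into a single equivariance argument and makes the homotopy $\Phi\simeq\Psi\circ p$ immediate.
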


\begin{proof}
We follow again an argument presented in~\cite{nab}. % attributed to R.~Karasev.
Suppose that $X$ $\frac{1}{2} \sys_\phi$-collapses.
Denote by $\pi:\hat{X} \to X$ the cover corresponding to the subgroup~$\ker \phi \lhd \pi_1(X)$.
By Proposition~\ref{prop:mult}, there exists a finite covering of~$M$ of multiplicity at most~$n$ by open subsets~$U_i$ with $\bar{U}_i \subseteq B(x_i,\frac{1}{2} \sys_\phi(x_i))$ for some $x_i \in X$.
Without loss of generality, we can assume that the open subsets are connected.
Every loop~$\gamma$ in~$\bar{U}_i$ is homotopic to the concatenation of loops of length less than~$\sys_\phi(x_i)$ based at~$x_i$.
(The loops are of the form $\overline{x_ip_s} \cup \overline{p_sp_{s+1}} \cup \overline{p_{s+1}x_i}$, where the points $\{ p_i \}$ are given by a sufficiently fine subdivision of~$\gamma$.)
By definition of the $\phi$-systolic function, each homomorphism $\pi_1(U_i) \to \pi_1(X) \to G$ is trivial, where the first homomorphism is induced by the inclusion $U_i \hookrightarrow X$.
It follows that the preimage~$\pi^{-1}(U_i)$ of each open set~$U_i$ decomposes as the disjoint union of open sets~$\alpha \cdot \hat{U}_i \subseteq \hat{X}$, where $\hat{U}_i$ is a lift of~$U_i$ in~$\hat{X}$ and $\alpha$ runs over $H=\pi_1(X)/\ker \phi$.
Consider the nerve~$N$ of the covering~$\{ U_i \}$ of~$X$ and the nerve~$\hat{N}$ of the covering~$\{ \alpha \cdot \hat{U}_i \}$ of~$\hat{X}$.
We have the following commutative diagram
\begin{center}
\begin{tikzcd}
\hat{X} \arrow{r} \arrow{d}
&\hat{N} \arrow{d} \\
X \arrow{r} &N
\end{tikzcd}
\end{center}
where the horizontal maps correspond to the natural maps from a space to its nerve and the vertical maps are the quotient maps under the natural free actions of the subgroup~$H \leqslant G$.
We can construct an equivariant map $\hat{N} \to \tilde{K}$ to the universal cover~$\tilde{K}$ of~$K=K(G,1)$.
By construction, this map passes to the quotient giving rise to a map $N \to K$ such that the composite $X \to N \to K$ is homotopic to the classifying map induced by~$\phi$.
Since the covering~$\{ U_i \}$ has multiplicity at most~$n$, the nerve of~$N$ is a simplicial complex of dimension at most~$n-1$.
Therefore, the space~$X$ is not $\phi$-essential.
\end{proof}

\section{Margulis' constant and volume of large balls} \label{sec:margulis}

In this section, we introduce the Margulis function and apply the results of the previous section to obtain an exponential lower bound on the maximal volume of balls in the universal cover of a metric with negative curvature, and more generally, of a metric space satisfying similar features.

\medskip

We will need the following definition of the Margulis function.

\begin{definition} \label{def:margulis}
Let $X$ be a compact $n$-polyhedral length space.
Consider a group homomorphism $\phi:\pi_1(X) \to G$ to a discrete group~$G$.
For every $x \in X$ and $\mu >0$, denote by
\[
\Gamma^\mu_{\phi,x} = \langle \phi([\gamma]) \in G \mid \gamma \mbox{ loop of } X \mbox{ based at } x \mbox{ with } \length(\gamma) \leq \mu \rangle
\]
the subgroup of~$G$ generated by the $\phi$-image of the homotopy classes of the loops of~$X$ based at~$x$ of length at most~$\mu$.

The \emph{Margulis function} of~$X$ is the function $\mu_\phi:X \to (0,\infty]$ defined as 
\[
\mu_\phi(x) = \sup \{ \mu \mid \Gamma^\mu_{\phi,x} \mbox{ has subexponential growth} \rangle.
\]

Clearly, the Margulis function is continuous and even $2$-Lipschitz.
With this definition, the subgroup~$ \Gamma^\mu_{\phi,x}$ has subexponential growth for every $\mu < \mu_\phi(x)$.
Note that if the image of~$\phi$ has exponential growth, the Margulis function is bounded by $2 \, \diam(X)$.
%
%The \emph{Margulis constant} of~$X$ is defined as $\mu_\phi(X) = \inf \mu_\phi$.
When $\phi$ is the identity homomorphism, we simply write~$\mu$ for~$\mu_\phi$.
\end{definition}

\begin{remark}
One could replace ``has subexponential growth" with ``is virtually nilpotent" in the definition of the Margulis function since we will be primarily interested in groups satisfying the Tits alternative; see~Definition~\ref{def:tits}.
\end{remark}

We have the following non-collapsing result.

\begin{proposition} \label{prop:simpl}
Every closed $n$-manifold~$M$ with nonzero simplicial volume does not $\frac{1}{2} \mu$-collapse.
\end{proposition}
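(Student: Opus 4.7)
The plan is to argue by contradiction using Gromov's vanishing theorem for simplicial volume, following the same template as Proposition~\ref{prop:sys}.

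Suppose that $M$ is $\frac{1}{2}\mu$-collapsible. By Proposition~\ref{prop:mult}, there is a finite open covering $\{U_i\}$ of $M$ of multiplicity at most $n$ such that $\bar{U}_i \subseteq B(x_i,\frac{1}{2}\mu(x_i))$ for some $x_i \in M$. Replacing each $U_i$ by its connected components, we may assume every $U_i$ is connected. Since $\bar{U}_i$ is compact and contained in the \emph{open} ball $B(x_i,\frac{1}{2}\mu(x_i))$, we have $r_i := \sup_{y \in \bar{U}_i} d(y,x_i) < \frac{1}{2}\mu(x_i)$.

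Next I would show that the homomorphism $\pi_1(U_i,x_i') \to \pi_1(M)$ induced by the inclusion has image of subexponential growth (after transporting the basepoint from any $x_i' \in U_i$ to $x_i$ by a fixed path). Exactly as in the proof of Proposition~\ref{prop:sys}, any loop $\gamma \subseteq \bar{U}_i$ is homotopic in $M$ to a concatenation of loops of the form $\overline{x_i p_s} \cup \overline{p_s p_{s+1}} \cup \overline{p_{s+1}x_i}$ based at $x_i$, where the $p_s$ come from a fine enough subdivision of $\gamma$. Each such short loop has length at most $2r_i + \eta$ for $\eta>0$ arbitrarily small, hence less than $\mu(x_i)$ by choice of $\eta$. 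By definition of the Margulis function, such loops generate a subgroup of subexponential, hence amenable, growth in $\pi_1(M)$.

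Thus $\{U_i\}$ is an open covering of $M$ of multiplicity at most $n$ for which each inclusion-induced map $\pi_1(U_i) \to \pi_1(M)$ has amenable image. Gromov's vanishing theorem for simplicial volume (the ``amenable cover'' theorem, see~\cite{gro82}) asserts that any closed $n$-manifold admitting such a covering satisfies $\|M\| = 0$, contradicting the hypothesis $\|M\| \neq 0$. Hence $M$ does not $\frac{1}{2}\mu$-collapse.

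The main technical point is the verification that the loops extracted from $\bar{U}_i$ can indeed be taken to have length \emph{strictly} less than $\mu(x_i)$, which is what allows the definition of the Margulis function to apply. This is ensured by the compactness of $\bar{U}_i$ inside the open ball of radius $\frac{1}{2}\mu(x_i)$ together with a sufficiently fine subdivision, just as in Proposition~\ref{prop:sys}; beyond that, the proposition is essentially a direct combination of Proposition~\ref{prop:mult} with Gromov's amenable cover theorem.
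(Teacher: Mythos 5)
Your proof follows essentially the same route as the paper's: contradiction via Proposition~\ref{prop:mult}, showing each $\pi_1(U_i)\to\pi_1(M)$ has image generated by short loops at~$x_i$ hence of subexponential (so amenable) growth by Definition~\ref{def:margulis}, and concluding with Gromov's amenable-cover vanishing theorem~\cite{gro82}. The only extra touch is your explicit use of compactness of $\bar{U}_i$ inside the open ball to get the strict inequality on loop lengths needed to invoke the Margulis function cleanly, which is sound and in fact slightly more careful than the paper's phrasing.
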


\begin{proof}
Suppose that $M$ $\frac{1}{2} \mu$-collapses.
By Proposition~\ref{prop:mult}, there exists a finite covering of~$M$ of multiplicity at most~$n$ by open subsets~$U_i$ with $\bar{U}_i \subseteq B(x_i,\frac{1}{2} \mu(x_i))$ for some $x_i \in X$.
Without loss of generality, we can assume that the open subsets~$U_i$ are connected.
The image of~$\pi_1(U_i)$ under the $\pi_1$-homomorphism induced by the inclusion map $U_i \hookrightarrow M$ is generated by the homotopy classes of loops of length at most~$\mu(x_i)$ based at~$x_i$.
By definition of the Margulis function, see Definition~\ref{def:margulis}, this subgroup of~$\pi_1(X)$ has subexponential growth.
Thus, it is amenable.
By Gromov's vanishing simplicial volume theorem, see~\cite{gro82} or~\cite{iva}, it follows that $M$ has zero simplicial volume.
\end{proof}

\begin{remark}
More generally, every closed $n$-manifold with nonzero minimal volume entropy does not $\frac{1}{2} \mu$-collapse; see~\cite[Corollary~2.11]{BS}.
\end{remark}

Let us introduce a quantitative version of the Tits alternative for groups.

\begin{definition} \label{def:tits}
Let $\kappa$ be a positive integer.
A group~$G$ satisfies the \emph{$\kappa$-Tits alternative} if for every symmetric subset~$S$ of~$G$ containing the identity, either $S$ generates a subgroup of subexponential growth, or there exist two elements in~$S^\kappa$ generating a nonabelian free subgroup.
In the latter case, the subgroup generated by~$S$ has exponential growth and its algebraic entropy (or exponential growth rate) with respect to~$S$ is at least~$\frac{\log(3)}{\kappa}$.
\end{definition}

\begin{example} \label{ex}
Our main source of examples of groups satisfying the $\kappa$-Tits alternative is given by fundamental groups of closed Riemanian $n$-manifolds with sectional curvature lying between $-k^2$ and~$-1$ for some $k \geq 1$, where $\kappa=\kappa(n,k)$ depends only on~$n$ and~$k$; see~\cite{DKL19}.
\end{example}

We can now state the main general result of this section.

\begin{theorem} \label{theo:ball}
Let $X$ be a compact $n$-polyhedral length space and $\phi:\pi_1(X) \to G$ be a group homomorphism whose image satisfies the $\kappa$-Tits alternative.
Suppose that $X$ does not $\frac{1}{2} \mu_\phi$-collapse.
Then there exists $\tilde{x}_0 \in \tilde{X}$ such that 
\[
\HH_n(B_{\tilde{X}}(\tilde{x}_0,r)) \geq \vol_n(B_{\hyp^n}(C \, r))
\]
for every $r \geq 0$, where $C = C_{n,\kappa} \cdot \min\{ 1,\frac{1}{\sup \mu_\phi} \}$ with $C_{n,\kappa} >0$ depending only on~$n$ and~$\kappa$.
\end{theorem}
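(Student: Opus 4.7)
The plan is to combine the local volume bound of Theorem~\ref{theo:x_0} with an orbit-counting argument in the universal cover driven by the $\kappa$-Tits alternative at a carefully chosen basepoint.

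First, I would apply Theorem~\ref{theo:x_0} with $\rho = \tfrac{1}{2}\mu_\phi$, continuous since $\mu_\phi$ is $2$-Lipschitz. The hypothesis that $X$ does not $\tfrac{1}{2}\mu_\phi$-collapse supplies a point $x_0 \in X$ with $\HH_n(B_X(x_0,r)) \geq c_n r^n$ for every $r \in [0, \tfrac{1}{2}\mu_\phi(x_0)]$; since the covering projection $\tilde X \to X$ is locally isometric, the same lower bound transfers to $B_{\tilde X}(\tilde x_0, r)$ for any lift $\tilde x_0$. Simultaneously, by the very definition of $\mu_\phi(x_0)$, for every $\mu > \mu_\phi(x_0)$ the subgroup $\Gamma^\mu_{\phi,x_0} \leq G$ fails to have subexponential growth, so the $\kappa$-Tits alternative yields two elements of $(\Gamma^\mu_{\phi,x_0})^\kappa$ generating a nonabelian free subgroup $F_2 \leq G$; these are $\phi$-images of loops $\gamma_1, \gamma_2$ at $x_0$ of length at most $L := \kappa\mu$. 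The classes $\tilde a_i = [\gamma_i] \in \pi_1(X)$ generate a subgroup $\tilde F_2 \leq \pi_1(X)$, and since $\phi$ restricts to a surjection of the two-generated group $\tilde F_2$ onto $F_2$, it is an isomorphism; in particular $\tilde F_2$ is free of rank two.

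The heart of the argument is a separation lemma: setting $r_0 := \tfrac{1}{4}\mu_\phi(x_0)$, every $\tilde u \in \tilde F_2$ with $d_{\tilde X}(\tilde x_0, \tilde u \tilde x_0) \leq 2 r_0$ lies in a fixed cyclic subgroup $\langle \tilde g \rangle \leq \tilde F_2$. Indeed, such a $\tilde u$ is represented by a loop at $x_0$ of length at most $2 r_0 < \mu_\phi(x_0)$, hence $\phi(\tilde u) \in \Gamma^{2r_0}_{\phi,x_0}$, which has subexponential growth; so $\phi(\tilde u) \in F_2 \cap \Gamma^{2r_0}_{\phi,x_0}$ is a free subgroup of $F_2$ (Nielsen--Schreier) of subexponential growth, hence cyclic, and pulling back via $\phi|_{\tilde F_2}$ locates $\tilde u$ inside a cyclic $\langle \tilde g \rangle$. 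Given the lemma, I would select one representative in each $\langle \tilde g \rangle$-coset among $\tilde F_2$-words of length $\leq k$. The word-length-$k$ ball in $\tilde F_2$ contains at least $3^k$ elements and each cyclic coset meets it in $O(k)$ elements (the stable length of any nontrivial element of a free group being $\geq 1$), producing at least $c \cdot 3^k/k$ distinct cosets. For representatives $\tilde w_i \neq \tilde w_j$ of distinct cosets one has $\tilde w_i^{-1}\tilde w_j \notin \langle \tilde g \rangle$, so $d_{\tilde X}(\tilde w_i \tilde x_0, \tilde w_j \tilde x_0) > 2 r_0$ by the lemma. The balls $B_{\tilde X}(\tilde w_i \tilde x_0, r_0)$ are therefore pairwise disjoint, each of volume $\geq c_n r_0^n$ (by Theorem~\ref{theo:x_0} applied at $\tilde w_i \tilde x_0$, which projects to $x_0$), and all contained in $B_{\tilde X}(\tilde x_0, kL + r_0)$, yielding
\[
\HH_n(B_{\tilde X}(\tilde x_0, kL + r_0)) \geq \frac{c \cdot 3^k}{k}\, c_n r_0^n.
\]

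An elementary final comparison merges this exponential large-scale bound with the polynomial small-scale bound $c_n r^n$ and matches them against the hyperbolic profile $\vol_n(B_{\hyp^n}(Cr))$, which is of order $(Cr)^n$ for small $Cr$ and of order $e^{(n-1)Cr}$ for large $Cr$. Choosing $C_{n,\kappa} \leq \log 3/((n-1)\kappa)$ ensures $(n-1) C \leq \log 3/L$ in the exponential regime (using $L \leq \kappa\sup\mu_\phi$) and absorbs the polynomial correction $k^{-1}$; a comparable choice handles the polynomial regime, yielding the stated $C = C_{n,\kappa}\min\{1, 1/\sup\mu_\phi\}$. The hard part is the separation lemma, where the metric definition of the Margulis function (via $\Gamma^{2r_0}_{\phi,x_0}$) must mesh precisely with the algebraic rigidity of free groups (via Nielsen--Schreier and the $\kappa$-Tits alternative) to control overlap of translates; a secondary technical issue is uniformly interpolating the polynomial and exponential regimes through the transition near $r \sim \mu_\phi(x_0)$ to obtain a single constant $C$ valid for all $r \geq 0$.
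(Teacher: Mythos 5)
Your argument follows the same overall strategy as the paper — apply Theorem~\ref{theo:x_0} at a point of non-collapse, invoke the $\kappa$-Tits alternative to produce a rank-two free subgroup generated by short loops at $x_0$, pack translates of a definite-volume ball in $\tilde X$ using free-group growth, and match the resulting polynomial-then-exponential profile against $\vol_n(B_{\hyp^n}(Cr))$ — but the packing/separation step is done differently, and both versions work. The paper chooses, among all candidate free subgroups generated by pairs of elements of displacement $\leq 2\kappa\mu_0$, one whose orbit through $\tilde x_0$ meets $B(\tilde x_0,\mu_0)$ in the fewest points, and shows this minimum is exactly one (if $\alpha\tilde x_0$ were another orbit point, a suitable power $\alpha^k$ would again have small displacement and could replace a generator, strictly lowering the count); this yields genuinely disjoint translates of $B(\tilde x_0,\mu_0/2)$, hence exactly $\geq 3^w-1$ of them. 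You instead prove a clean ``small displacement $\Rightarrow$ cyclic'' lemma: any $\tilde u\in\tilde F_2$ with $d(\tilde x_0,\tilde u\tilde x_0)\leq\mu_0/2$ maps into the subexponential-growth group $\Gamma^{\mu_0/2}_{\phi,x_0}$, so $F_2\cap\Gamma^{\mu_0/2}_{\phi,x_0}$ contains no free subgroup of rank two and, being a subgroup of a free group, is therefore trivial or infinite cyclic; you then count left cosets of that cyclic subgroup in the word-length-$k$ ball, getting $\gtrsim 3^k/k$ disjoint translates. This is conceptually very transparent, and the extra $1/k$ is harmless against the exponential in the final comparison. Two details worth pinning down in your write-up: (i) the per-coset bound $O(k)$ on elements of word length $\leq k$ in a left coset $w\langle g\rangle$ needs to be $g$-independent — this holds because any nontrivial element of a free group has translation length $\geq 1$ on the Cayley tree, so $|g^m|$ grows at least linearly in $|m|$ after the fixed conjugating offset cancels, but you should spell out that the implied constant is uniform; (ii) your subexponential-growth argument uses that a subgroup of a group of subexponential growth cannot contain a nonabelian free group, rather than that it is itself finitely generated of subexponential growth, since $\Gamma^{\mu_0/2}_{\phi,x_0}$ need not be finitely generated; phrasing it that way avoids any ambiguity.
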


\begin{proof}
%Fix $\varepsilon \in (0,1)$ and $h \in (0,(\frac{\varepsilon}{1+\varepsilon})^n]$ as in Theorem~\ref{theo:x_0}.
According to Theorem~\ref{theo:x_0}, there exists $x_0 \in X$ such that for every lift $\tilde{x}_0 \in \tilde{X}$ of~$x_0$,
\begin{equation} \label{eq:H>r^n}
\HH_n(B_{\tilde{X}}(\tilde{x}_0,r)) \geq \HH_n(B_X(x_0,r)) \geq c_n \, r^n
\end{equation}
for every $r \in [0, \frac{1}{2} \mu_\phi(x_0)]$, where $c_n = \frac{1}{n!} \, \prod_{i=1}^n \lambda_i$.
For simplicity, denote $\mu_0=\mu_\phi(x_0)$.

\medskip

Let us establish an exponential lower bound for the volume of balls of large enough radius centered at~$\tilde{x}_0$.

\begin{lemma}
For every $r \geq \frac{1}{2} \, \mu_0$, we have
\begin{equation} \label{eq:H>e^r}
\HH_n(B_{\tilde{X}}(\tilde{x}_0,r)) \geq \frac{c_n}{2^n} \, \mu_0^n \left( 3^{\left \lfloor \frac{r-\frac{\mu_0}{2}}{2 \kappa \mu_0} \right \rfloor } -1 \right)
\end{equation}
where $\lfloor \cdot \rfloor$ is the floor function.
\end{lemma}

\begin{proof}
The idea of the proof is to find a nonabelian free subgroup~$F$ of~$\pi_1(X,x_0)$ generated by loops of length at most~$2 \kappa \mu_0$ such that the translates of~$B_{\tilde{X}}(\tilde{x}_0,\frac{\mu_0}{2})$ under the free action of~$F$ on~$\tilde{X}$ are disjoint.

\medskip

Denote by~$S$ the symmetric generating set of~$\Gamma^{2 \mu_0}_{\phi,x_0} \leqslant G$ formed of the elements~$\phi([\gamma])$ where $\gamma$ is a loop of~$X$ based at~$x_0$ of length at most~$2 \mu_0$.
By definition of the Margulis function, see Definition~\ref{def:margulis}, the subgroup $\Gamma^{2 \mu_0}_{\phi,x_0} \leqslant G$ generated by~$S$ has exponential growth. 
(One could consider $\Gamma^{\mu_0+\varepsilon}_{\phi,x_0}$ instead of $\Gamma^{2 \mu_0}_{\phi,x_0}$ if one wanted to be more precise.)
Note also that every element of~$S^\kappa$ is the $\phi$-image of the homotopy class of a loop of~$X$ based at~$x_0$ of length at most $2 \kappa \mu_0$.
Thus, since the image of~$\phi$ in~$G$ satisfies the $\kappa$-Tits alternative, there exist two loops~$\gamma_1$ and~$\gamma_2$ based at~$x_0$ of length at most~$2 \kappa \mu_0$ whose homotopy classes~$\alpha_1$ and~$\alpha_2$ generate a nonabelian free subgroup of~$\pi_1(X,x_0)$.

\medskip

Consider a nonabelian free subgroup $F \leqslant \pi_1(X,x_0)$ generated by two (nontrivial) elements $\alpha_1$ and~$\alpha_2$ with $d_{\tilde{X}}(\tilde{x}_0,\alpha_i \cdot \tilde{x}_0) \leq 2 \kappa \mu_0$ such that the orbit $F \cdot \tilde{x}_0$ has a minimal number of points in~$B_{\tilde{X}}(\tilde{x}_0,\mu_0)$.
(The existence of such subgroup is ensured by the previous discussion.)
Suppose that the intersection $F \cdot \tilde{x}_0 \cap B_{\tilde{X}}(\tilde{x}_0,\mu_0)$ has a point~$\alpha \cdot \tilde{x}_0$ different from~$\tilde{x}_0$ with ~$\alpha \in F$.
Denote by~$k$ the smallest positive integer such that $\alpha^k \cdot \tilde{x}_0$ does not lie in $B_{\tilde{X}}(\tilde{x}_0,\mu_0)$.
We have
\begin{align*}
d_{\tilde{X}}(\tilde{x}_0,\alpha^k \cdot \tilde{x}_0) & \leq d_{\tilde{X}}(\tilde{x}_0,\alpha^{k-1} \cdot \tilde{x}_0) + d_{\tilde{X}}(\alpha^{k-1} \cdot \tilde{x}_0,\alpha^k \cdot \tilde{x}_0) \\
 & \leq \mu_0 + d_{\tilde{X}}(\tilde{x}_0,\alpha \cdot \tilde{x}_0) \\
 & \leq 2 \mu_0
\end{align*}
Thus, the displacement of~$\tilde{x}_0$ by~$\alpha^k$ is at most $2 \kappa \mu_0$.
(Note that $\kappa \geq 1$.)
Furthermore, either the subgroup generated by $\alpha_1$ and~$\alpha^k$, or the subgroup generated by $\alpha_2$ and~$\alpha^k$ is nonabelian free.
Denote by $F' \leqslant F$ this nonabelian free subgroup.
Since $k \geq 2$, the point $\alpha \cdot \tilde{x}_0$ does not lie in the orbit of~$F'$.
Therefore, the orbit~$F' \cdot \tilde{x}_0$ of~$F'$ has fewer elements in $B_{\tilde{X}}(\tilde{x}_0,\mu_0)$ than the orbit~$F \cdot \tilde{x}_0$ of~$F$, which contradicts the definition of~$F$.
Thus, the only point of the orbit~$F \cdot \tilde{x}_0$ lying in~$B_{\tilde{X}}(\tilde{x}_0,\mu_0)$ is~$\tilde{x}_0$.
This implies that the translates of~$B_{\tilde{X}}(\tilde{x}_0,\frac{\mu_0}{2})$ under the free action of~$F$ on~$\tilde{X}$ are disjoint.

\medskip

Let $r \geq \frac{1}{2} \, \mu_0$.
The ball $B_{\tilde{X}}(\tilde{x}_0,r)$ contains all the translates of $B_{\tilde{X}}(\tilde{x}_0,\frac{\mu_0}{2})$ under the elements of~$F =\langle \alpha_1, \alpha_2 \rangle$ which can be written as words of length at most 
\[
w=\left \lfloor \frac{r-\frac{\mu_0}{2}}{2 \kappa \mu_0} \right \rfloor
\]
with the letters~$\alpha_1$ and~$\alpha_2$.
Since the translates of $B_{\tilde{X}}(\tilde{x}_0,\frac{\mu_0}{2})$ under~$F$ are disjoint, we derive from~\eqref{eq:H>r^n} that
\[
\HH_n(B_{\tilde{X}}(\tilde{x}_0,r)) \geq c_n \, \frac{\mu_0^n}{2^n} \, (3^w-1)
\]
as desired.
\end{proof}

Let us resume the proof of Theorem~\ref{theo:ball} using the polynomial lower bound~\eqref{eq:H>r^n} and the exponential lower bound~\eqref{eq:H>e^r} on the volume of balls in the universal cover of~$X$.

Recall that
\begin{equation} \label{eq:volH}
\vol_n(B_{\hyp^n}(r)) = \sigma_{n-1} \, \int_0^r \sinh(t)^{n-1} \, dt
\end{equation}
where $\sigma_{n-1}$ is the Euclidean volume of the unit sphere~$S^{n-1}$.
We want to show that
\begin{equation} \label{eq:wanted}
\HH_n(B_{\tilde{X}}(\tilde{x}_0,r)) \geq \vol_n(B_{\hyp^n}(C \, r))
\end{equation}
for every $r \geq 0$, where $C>0$ is a metric-independent constant to determine.
We will consider the following situations:

\begin{enumerate}
\item If $r \geq 5 \kappa \mu_0$ then the estimate~\eqref{eq:H>e^r} yields the following lower bound
\begin{equation} \label{eq:r>5}
\HH_n(B_{\tilde{X}}(\tilde{x}_0,r)) \geq \frac{c_n}{2^n} \, \mu_0^n \left( 3^{\frac{r-\frac{\mu_0}{2}}{4\kappa \mu_0}}-1 \right).
\end{equation} 

\item If $r \leq 5 \kappa \mu_0$ then the estimate~\eqref{eq:H>r^n} yields the following lower bound
\begin{equation} \label{eq:r<5}
\HH_n(B_{\tilde{X}}(\tilde{x}_0,r)) \geq \frac{c_n}{2^n (5\kappa)^n} \, r^n.
\end{equation}

\item If $r \geq 1$ then the formula~\eqref{eq:volH} combined with the inequality $\sinh(t) \leq \frac{e^t}{2}$ for $t \geq 0$ yields the following upper bound
\begin{equation} \label{eq:r>1}
\vol_n(B_{\hyp^n}(C r)) \leq \frac{\sigma_{n-1}}{(n-1) 2^{n-1}} \, \left( e^{(n-1) C r} - 1 \right).
\end{equation}

\item If $r \leq 1$ then the formula~\eqref{eq:volH} combined with the inequality $\sinh(t) \geq \sinh(1) \, t$ for $0 \leq t \leq 1$ yields the following upper bound
\begin{equation} \label{eq:r<1}
\vol_n(B_{\hyp^n}(C r)) \leq \frac{\sigma_{n-1}}{n} \, \sinh(1)^{n-1} \, C^n \, r^n.
\end{equation}
\end{enumerate}

We will consider four cases analyzing the different combinations of situations, starting with the more technical cases.

\medskip

\noindent \underline{\it Case 1.}
Suppose that $r \geq 5 \kappa \mu_0$ and $r \geq 1$.
By~\eqref{eq:r>5} and~\eqref{eq:r>1}, the inequality~\eqref{eq:wanted} holds if
\begin{equation} \label{eq:r>5>1}
\frac{c_n}{2^n} \, \mu_0^n \left( 3^{\frac{r-\frac{\mu_0}{2}}{4\kappa \mu_0}}-1 \right) \geq \frac{\sigma_{n-1}}{(n-1) 2^{n-1}} \, \left( e^{(n-1) C r} - 1 \right).
\end{equation}
Comparing the exponential growth rates of the two sides of the inequality, we want $\frac{\log(3)}{4 \kappa \mu_0} \geq (n-1) C$.
That is, 
\begin{equation} \label{eq:log3}
C \leq \frac{\log(3)}{4(n-1) \kappa \mu_0}.
\end{equation}
Thus, we need to compare $f(t)=a \, e^{\alpha t} + a'$ and $g(t)= b \, e^{\beta t} +b'$ with $a, b >0$ and $\alpha \geq \beta >0$.
We have $f(t) \geq g(t)$ for every $t \geq t_0$ if $f(t_0) \geq g(t_0)$ and $f'(t_0) \geq g'(t_0)$.
In our case, we need to check the inequality~\eqref{eq:r>5>1} and the inequality
\[
\frac{c_n}{2^n} \, \mu_0^n \, \frac{\log(3)}{4 \kappa \mu_0} \, 3^{\frac{r-\frac{\mu_0}{2}}{4\kappa \mu_0}} \geq \frac{\sigma_{n-1}}{2^{n-1}} \, C \, e^{(n-1) C r}
\]
for $r = \max \{ 5 \kappa \mu_0, 1\}$.

If $5 \kappa \mu_0 \geq 1$, it is enough to have 
\[
C \leq \frac{1}{5 (n-1) \kappa \mu_0} \, \log \left( 1+ \frac{(n-1) c_n}{\sigma_{n-1}} \mu_0^n \right)
\]
for the first inequality (since the term between parenthesis in the left-hand side of~\eqref{eq:r>5>1} is bounded from below by~$2$ when $r=5 \kappa \mu_0$) and
\[
\frac{c_n}{2^n} \, \mu_0^n \, \frac{\log(3)}{4 \kappa \mu_0} \, 3^{-\frac{1}{8 \kappa}} \geq \frac{\sigma_{n-1}}{2^{n-1}} \, C
\]
for the second inequality using~\eqref{eq:log3}.
Since $\mu_0 \geq \frac{1}{5 \kappa}$, these conditions are satisfied if $C \leq \frac{C_{n,\kappa}}{\mu_0}$ for some constant $C_{n,\kappa} >0$ depending only on~$n$ and~$\kappa$.

If $5 \kappa \mu_0 \leq 1$, it is enough to have
\[
C \leq \frac{1}{n-1} \, \log \left( 1+ \frac{(n-1) c_n}{2 \sigma_{n-1}} \, ( 3^{\frac{1}{8 \kappa \mu_0}} - 1)  \mu_0^n \right)
\]
for the first inequality (since $r - \frac{\mu_0}{2} \geq \frac{r}{2}$ when $r=1$) and
\[
\frac{c_n}{2^n} \, \mu_0^n \, \frac{\log(3)}{4 \kappa \mu_0} \, 3^{\frac{1}{8 \kappa \mu_0}} \geq \frac{\sigma_{n-1}}{2^{n-1}} \, C \, e^{(n-1) C}
\]
for the second inequality.
Since $\mu_0 \leq \frac{1}{5\kappa}$, the expression $\mu_0^{n-1} \, 3^{\frac{1}{8 \kappa \mu_0}}$ is bounded from below by a positive constant depending only on~$n$ and~$\kappa$.
Thus, the previous conditions are satisfied if $C \leq C_{n,\kappa}$ for some constant $C_{n,\kappa} >0$ depending only on~$n$ and~$\kappa$.

\medskip

\noindent \underline{\it Case 2.}
Suppose that $r \geq 5 \kappa \mu_0$ and $r \leq 1$.
By~\eqref{eq:r>5} and~\eqref{eq:r<1}, the inequality~\eqref{eq:wanted} holds if
\begin{equation} \label{eq:r>5<1}
\frac{c_n}{2^n} \, \mu_0^n \left( 3^{\frac{r-\frac{\mu_0}{2}}{4\kappa \mu_0}}-1 \right) \geq \frac{\sigma_{n-1}}{n} \, \sinh(1)^{n-1} \, C^n \, r^n.
\end{equation}
Observe that the left-hand side of this inequality divided by~$r^n$ is bounded from below by a positive constant depending only on~$n$ and~$\kappa$ when $5 \kappa \mu_0 \leq r \leq 1$.
Thus, the condition~\eqref{eq:r>5<1} is satisfied if $C \leq C_{n,\kappa}$ for some constant $C_{n,\kappa} >0$ depending only on~$n$ and~$\kappa$.

\medskip

\noindent \underline{\it Case 3.}
Suppose that $r \leq 5 \kappa \mu_0$ and $r \geq 1$.
By~\eqref{eq:r<5} and~\eqref{eq:r>1}, the inequality~\eqref{eq:wanted} holds if
\begin{equation} \label{eq:r<5>1}
\frac{c_n}{2^n (5\kappa)^n} \, r^n \geq \frac{\sigma_{n-1}}{(n-1) 2^{n-1}} \, \left( e^{(n-1) C r} - 1 \right).
\end{equation}
Since $1 \leq r \leq 5 \kappa \mu_0$, the left-hand side of this inequality is bounded from below by $\frac{c_n}{2^n (5\kappa)^n}$ and the right-hand side is bounded from above by $\frac{\sigma_{n-1}}{(n-1) 2^{n-1}} \, \left( e^{5(n-1)\kappa \mu_0 \, C } - 1 \right)$.
Thus, the condition~\eqref{eq:r<5>1} is satisfied if $C \leq \frac{C_{n,\kappa}}{\mu_0}$ for some constant $C_{n,\kappa} >0$ depending only on~$n$ and~$\kappa$.

\medskip

\noindent \underline{\it Case 4.}
Suppose that $r \leq 5 \kappa \mu_0$ and $r \leq 1$.
By~\eqref{eq:r<5} and~\eqref{eq:r<1}, the inequality~\eqref{eq:wanted} holds if
\begin{equation} \label{eq:r<5<1}
\frac{c_n}{2^n (5\kappa)^n} \, r^n \geq \frac{\sigma_{n-1}}{n} \, \sinh(1)^{n-1} \, C^n \, r^n.
\end{equation}
This condition is satisfied if $C \leq C_{n,\kappa}$ for some constant $C_{n,\kappa} >0$ depending only on~$n$ and~$\kappa$.

\medskip

In conclusion, there exists a positive constant~$C_{n,\kappa}$ depending only on~$n$ and~$\kappa$ such that for every $C \leq C_{n,\kappa} \cdot \min\{ 1,\frac{1}{\mu_0} \}$
\begin{equation} \label{eq:end}
\HH_n(B_{\tilde{X}}(\tilde{x}_0,r)) \geq \vol_n(B_{\hyp^n}(C \, r))
\end{equation}
for every $r \geq 0$.
\end{proof}

\forget
\underline{\it Case 1.}
Suppose that $r \leq 1$.
By \eqref{eq:H>r^n} and~\eqref{eq:volH} combined with the inequality $\sinh(t) \leq \sinh(1) \, t$ for every $t \in [0,1]$, it is enough to show that
\[
c_n \, r^n \geq \frac{\sigma_{n-1}}{n} \, \sinh(1)^{n-1} \, C^n \, r^n.
\]
That is, $C \leq \left( \frac{n \, c_n}{\sigma_{n-1} \sinh(1)^{n-1}} \right)^\frac{1}{n}$.

\medskip

\underline{\it Case 2.}
Suppose that $r \geq 1$ and $r \geq \mu_0$.
By \eqref{eq:H>e^r} and~\eqref{eq:volH} combined with the inequality $\sinh(t) \leq \frac{e^t}{2}$ for $t \geq 0$, it is enough to show that
\begin{equation} \label{eq:ee}
\frac{c_n}{2^n} \, \mu_0^n  \left( 3^{\frac{r-\frac{\mu_0}{2}}{2 \kappa \mu_0}} - 1 \right) \geq \frac{\sigma_{n-1}}{(n-1) 2^{n-1}} \, \left( e^{(n-1) C r} - 1 \right)
\end{equation}
for every $r \geq \max\{ 1,\mu_0 \}$.
This requires that $\frac{\log(3)}{2 \kappa \mu_0} \geq (n-1) C$.
That is, $C \leq \frac{\log(3)}{2(n-1) \kappa \mu_0}$.
In this case, the relation~\eqref{eq:ee} only needs to be checked for $r \geq \max\{1,\mu_0\}$.

If $\mu_0 \geq 1$, this leads to the condition
\[
C \leq \min \left\{ \frac{\log(3)}{2(n-1) \kappa \mu_0}, \frac{1}{(n-1) \mu_0} \, \log \left( 1+ \frac{(n-1) c_n}{2 \sigma_{n-1}} \, ( 3^{\frac{1}{4\kappa}} - 1)  \mu_0^n \right) \right\}.
\]
Since $\mu_0 \geq 1$, this condition is satisfied if $C \leq C_{n,\kappa}$ for some constant $C_{n,\kappa} >0$ depending only on~$n$ and~$\kappa$.

If $\mu_0 \leq 1$, this leads to the condition
\[
C \leq \min \left\{ \frac{\log(3)}{2(n-1) \kappa \mu_0}, \frac{1}{(n-1)} \, \log \left( 1+ \frac{(n-1) c_n}{2 \sigma_{n-1}} \, ( 3^{\frac{1}{4 \kappa \mu_0}} - 1)  \mu_0^n \right) \right\}.
\]
Since $\mu_0 \leq 1$, this condition is satisfied if $C \leq \frac{C_{n,\kappa}}{\mu_0}$ for some constant $C_{n,\kappa} >0$ depending only on~$n$ and~$\kappa$.

\medskip

\underline{\it Case 3.}
Suppose that $r \geq 1$ and $r \leq \mu_0$.
By \eqref{eq:H>r^n} and~\eqref{eq:volH} combined with the inequality $\sinh(t) \leq \frac{e^t}{2}$ for $t \geq 0$, it is enough to show that
\[
c_n \, r^n \geq \frac{\sigma_{n-1}}{(n-1) 2^{n-1}} \, \left( e^{(n-1) C r} - 1 \right)
\]
for $1 \leq r \leq \mu_0$.
This just needs to be checked for $r=\mu_0$, for which we obtain the condition
\[
C \leq \frac{1}{(n-1) \mu_0} \, \log \left( 1+ \frac{(n-1) c_n}{\sigma_{n-1}} \, 2^{n-1}  \mu_0^n \right).
\]
Since $\mu_0 \geq 1$, this condition is satisfied if $C \leq \frac{C_{n}}{\mu_0}$ for some constant $C_{n} >0$ depending only on~$n$.

\medskip

In conclusion, there exists a constant~$C_{n,\kappa} >0$ depending only on~$n$ and~$\kappa$ such that for every $C \leq C_{n,\kappa} \cdot \min\{ 1,\frac{1}{\mu_0} \}$
\begin{equation} \label{eq:end}
\HH_n(B_{\tilde{X}}(\tilde{x}_0,r)) \geq \vol_n(B_{\hyp^n}(C \, r))
\end{equation}
for every $r \geq 0$.
\end{proof}
\forgotten

We immediately derive the following two corollaries.
The first corollary claims that when the volume of~$X$ is small enough, the constant~$C$ in Theorem~\ref{theo:ball} does not depend on the Margulis function, but only on~$n$ and~$\kappa$.

\begin{corollary} \label{coro:ball}
Let $X$ be a compact $n$-polyhedral length space and $\phi:\pi_1(X) \to G$ be a group homomorphism whose image satisfies the $\kappa$-Tits alternative.
Suppose that $X$ does not $\frac{1}{2} \mu_\phi$-collapse.
There exists a constant~$\delta_n>0$ such that if $\HH_n(X) \leq \delta_n$ then there exists $\tilde{x}_0 \in \tilde{X}$ such that 
\[
\HH_n(B_{\tilde{X}}(\tilde{x}_0,r)) \geq \vol_n(B_{\hyp^n}(C_{n,\kappa} \, r))
\]
for every $r \geq 0$, where $C_{n,\kappa} >0$ is a constant depending only on~$n$ and~$\kappa$.

In particular, there exists constant~$\delta'_{n,\kappa}>0$ such that if $\HH_n(X) \leq \delta'_{n,\kappa}$ then there exists $\tilde{x}_0 \in \tilde{X}$ such that 
\[
\HH_n(B_{\tilde{X}}(\tilde{x}_0,r)) \geq \vol_n(B_{\hyp^n}(r))
\]
for every $r \geq 1$.
\end{corollary}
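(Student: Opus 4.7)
The plan is to combine Theorem~\ref{theo:ball} with the polynomial lower bound of Theorem~\ref{theo:x_0}, exploiting the fact that small total volume forces the Margulis function to be small at a good point. Applying Theorem~\ref{theo:x_0} to $\rho = \tfrac{1}{2}\mu_\phi$, the non-collapsing hypothesis yields $x_0 \in X$ with $\HH_n(B_X(x_0,r)) \geq c_n r^n$ for every $r \in [0, \tfrac{1}{2}\mu_\phi(x_0)]$. Evaluating at $r = \tfrac{1}{2}\mu_\phi(x_0)$ and using $B_X(x_0, \tfrac{1}{2}\mu_\phi(x_0)) \subseteq X$ gives the key estimate
\[
\mu_0 := \mu_\phi(x_0) \leq 2\,\bigl(\HH_n(X)/c_n\bigr)^{1/n}.
\]

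For the first statement, set $\delta_n := c_n/2^n$. If $\HH_n(X) \leq \delta_n$ then $\mu_0 \leq 1$, so $\min\{1, 1/\mu_0\} = 1$. Reading the proof of Theorem~\ref{theo:ball} with $\mu_0 = \mu_\phi(x_0)$ in place of $\sup\mu_\phi$ (which is what the four-case analysis actually delivers), the admissible $C$ equals $C_{n,\kappa}$, as required.

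For the second statement, a direct appeal to the first does not suffice when $C_{n,\kappa} < 1$, so we sharpen by forcing $\mu_0$ to be very small. If $\mu_0 \leq 1/(5\kappa)$, then every $r \geq 1$ automatically satisfies $r \geq 5\kappa\mu_0$, placing us in Case~1 of the proof of Theorem~\ref{theo:ball}. The inequality to verify with $C = 1$ is
\[
\tfrac{c_n}{2^n}\mu_0^n\bigl(3^{(r-\mu_0/2)/(4\kappa\mu_0)} - 1\bigr) \geq \tfrac{\sigma_{n-1}}{(n-1)2^{n-1}}\bigl(e^{(n-1)r} - 1\bigr)
\]
for every $r \geq 1$. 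The standard growth-rate plus value/derivative comparison at $r=1$ used throughout the proof of Theorem~\ref{theo:ball} reduces this to: (i) the growth-rate condition $\log 3/(4\kappa\mu_0) \geq n-1$, and (ii) value and derivative inequalities at $r = 1$, all expressible as upper bounds on $\mu_0$ involving only $n$ and $\kappa$. As $\mu_0 \to 0$ the factor $\mu_0^n \, 3^{1/(4\kappa\mu_0)}$ blows up, so both families of conditions hold whenever $\mu_0 \leq \mu^*_{n,\kappa}$ for a threshold depending only on $n$ and~$\kappa$. Setting $\delta'_{n,\kappa} := c_n\,(\mu^*_{n,\kappa})^n/2^n$ and invoking the setup bound on $\mu_0$ then completes the proof.

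The main obstacle is the explicit bookkeeping underlying condition~(ii). The growth-rate condition~(i) is essentially cosmetic once $\mu_0$ is small, but one must track the constants in the Case~1 exponentials carefully at the base point $r=1$ to extract a clean threshold $\mu^*_{n,\kappa}$, and hence $\delta'_{n,\kappa}$, depending only on $n$ and $\kappa$.
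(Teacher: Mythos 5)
Your first part matches the paper: you apply the polynomial lower bound of Theorem~\ref{theo:x_0} at $r = \tfrac{1}{2}\mu_\phi(x_0)$ to force $\mu_0 \leq 1$, and then read off $C = C_{n,\kappa}$ from the proof of Theorem~\ref{theo:ball} (the theorem is stated with $\sup\mu_\phi$, but the proof really produces $\mu_\phi(x_0)$, as you correctly observe). For the second part, however, you take a genuinely different route. The paper deduces it from the first part by a single rescaling step: choose $\lambda = \lambda_{n,\kappa}$ large enough that $\vol_n(B_{\hyp^n}(C_{n,\kappa}\lambda r)) \geq \lambda^n \vol_n(B_{\hyp^n}(r))$ for all $r \geq 1$ (possible because hyperbolic ball volume grows exponentially while the right side only grows by a power), apply the first part to the rescaled space $\lambda X$ with $\delta'_{n,\kappa} = \delta_n/\lambda^n$, and scale back. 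You instead return to Case~1 of the four-case analysis and re-derive the inequality directly with $C = 1$, exploiting that $\mu_0^n\,3^{1/(4\kappa\mu_0)} \to \infty$ as $\mu_0 \to 0$ so that both the value and derivative comparisons at the base point $r=1$ (together with the growth-rate condition) hold once $\mu_0$ is below a threshold $\mu^*_{n,\kappa}$. Both arguments are valid; your approach is more hands-on and forces you to re-track the Case~1 constants, whereas the paper's rescaling trick sidesteps that bookkeeping entirely and reuses the first part as a black box. The rescaling is cleaner and more robust if the inner four-case analysis were ever modified, but your version makes the dependence of $\delta'_{n,\kappa}$ on $\kappa$ more transparent.
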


\begin{proof}
Applying the second inequality of~\eqref{eq:H>r^n} to $r=\frac{1}{2} \mu_\phi(x_0)$, we obtain
\begin{equation} \label{eq:m0}
\mu_\phi(x_0)^n \leq  2^n c_n \, \HH_n(X).
\end{equation}
Thus, if $\HH_n(X) \leq \delta_n := \frac{1}{2^n c_n}$ then $\mu_\phi(x_0) \leq 1$ and we can take $C=C_{n,\kappa}$ in~\eqref{eq:end}, which proves the first part of the corollary.

Recall that the volume of a ball in~$\hyp^n$ grows exponentially with its radius.
Thus, we can choose~$\lambda=\lambda_{n,\kappa} >0$ large enough such that 
\[
\vol_n(B_{\hyp^n}(C_{n,\kappa} \, \lambda r)) \geq \lambda^n \, \vol_n(B_{\hyp^n}(r))
\]
for every $r \geq 1$.
Denote by $\lambda X$ the compact $n$-polyhedral length space obtained by multiplying the metric on~$X$ by~$\lambda$.
If $\delta'_{n,\kappa}=\frac{\delta_n}{\lambda^n}$ then $\HH_n(\lambda X) = \lambda^n \, \HH_n(X) \leq \delta_n$.
Therefore, there exists $\tilde{x}_0 \in \widetilde{\lambda X} = \lambda \tilde{X}$ such that 
\[
\HH_n(B_{\lambda \tilde{X}}(\tilde{x}_0,t)) \geq \vol_n(B_{\hyp^n}(C_{n,\kappa} \, t))
\]
for every $t \geq 0$.
Hence,
\[
\HH_n(B_{\tilde{X}}(\tilde{x}_0,r)) = \frac{1}{\lambda^n} \HH_n(B_{\lambda \widetilde{X}}(\tilde{x}_0,\lambda r)) \geq \frac{1}{\lambda^n} \vol_n(B_{\hyp^n}(C_{n,\kappa} \lambda r)) \geq \vol_n(B_{\hyp^n}(r))
\]
for every $r \geq 1$.
\end{proof}

\forget
\begin{corollary}
Let $X$ be a compact $n$-polyhedral length space and $\phi:\pi_1(X) \to G$ be a group homomorphism whose image satisfies the $\kappa$-Tits alternative.
Suppose that $X$ does not $\frac{1}{2} \mu_\phi$-collapse.
Then there exists $\tilde{x}_0 \in \tilde{X}$ such that 
\[
\HH_n(B_{\tilde{X}}(\tilde{x}_0,r)) \geq \vol_n(B_{\hyp^n}(r))
\]
for every $r \geq \HH_n(X)^{\frac{1}{n}}$.
\end{corollary}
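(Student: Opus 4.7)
The plan is to reduce the statement to the second part of Corollary~\ref{coro:ball} by a metric rescaling. Let $V=\HH_n(X)$ and set $\lambda = (\delta'_{n,\kappa}/V)^{1/n}$, so that the rescaled length space $\lambda X$ has Hausdorff measure $\HH_n(\lambda X)=\delta'_{n,\kappa}$. Both hypotheses of the corollary, namely the absence of a $\frac{1}{2}\mu_\phi$-collapse and the $\kappa$-Tits alternative on $\phi(\pi_1(X))$, are scale-invariant: $\mu_\phi$ transforms linearly under rescaling of the metric, so the collapsing condition rescales accordingly, while the Tits alternative depends only on the abstract structure of the target group. Applying Corollary~\ref{coro:ball} to $\lambda X$ therefore yields a point $\tilde{x}_0 \in \widetilde{\lambda X}=\lambda \tilde{X}$ such that
\[
\HH_n(B_{\lambda \tilde{X}}(\tilde{x}_0,t)) \geq \vol_n(B_{\hyp^n}(t))
\]
for every $t\geq 1$.

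Undoing the scaling via the substitution $t=\lambda r$ and the identity $\HH_n(B_{\lambda \tilde X}(\tilde x_0,\lambda r))=\lambda^n \HH_n(B_{\tilde X}(\tilde x_0,r))$, one obtains
\[
\HH_n(B_{\tilde X}(\tilde x_0,r)) \geq \lambda^{-n}\vol_n(B_{\hyp^n}(\lambda r))
\]
for every $r\geq 1/\lambda = (V/\delta'_{n,\kappa})^{1/n}$. To conclude, I would compare $\lambda^{-n}\vol_n(B_{\hyp^n}(\lambda r))$ with $\vol_n(B_{\hyp^n}(r))$ using the explicit formula $\vol_n(B_{\hyp^n}(s))=\sigma_{n-1}\int_0^s \sinh(t)^{n-1}\,dt$; a change of variable reduces this to the pointwise comparison between $(\sinh(\lambda u)/\lambda)^{n-1}$ and $\sinh(u)^{n-1}$, and since $\lambda\mapsto \sinh(\lambda u)/\lambda$ is increasing on $(0,\infty)$ for each fixed $u>0$ and equals $\sinh(u)$ at $\lambda=1$, the desired inequality holds in the regime $\lambda\geq 1$.

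The main obstacle is the opposite regime $V>\delta'_{n,\kappa}$, for which $\lambda<1$: the monotonicity then runs the wrong way, and moreover the rescaling threshold $(V/\delta'_{n,\kappa})^{1/n}$ is strictly larger than the claimed threshold $V^{1/n}$. I would handle both difficulties by bypassing the intermediate application of Corollary~\ref{coro:ball} and working directly with the two lower bounds established in the proof of Theorem~\ref{theo:ball}, namely the polynomial estimate~\eqref{eq:H>r^n} and the exponential estimate~\eqref{eq:H>e^r}, using the bound $\mu_0 \leq 2c_n^{-1/n}V^{1/n}$ extracted from~\eqref{eq:m0} to absorb the volume of $X$ into the threshold $V^{1/n}$. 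The polynomial bound covers the regime where $r$ is comparable to $V^{1/n}$, while the exponential bound, together with the asymptotic growth $\vol_n(B_{\hyp^n}(r))\sim e^{(n-1)r}$, covers the regime of large $r$, with the transition taking place at the scale $\mu_0 \sim V^{1/n}$ dictated precisely by~\eqref{eq:m0}. The delicate point is to tune the constants hidden in this case-by-case analysis so that the crossover between the two regimes is genuinely at $r=V^{1/n}$ and not at some larger constant multiple thereof.
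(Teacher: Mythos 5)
Your first approach (rescaling by $\lambda=(\delta'_{n,\kappa}/V)^{1/n}$ and invoking the second part of Corollary~\ref{coro:ball}) correctly reproduces the skeleton of the argument the author drafted for this corollary, and you correctly identify that it only works when $\lambda\geq 1$, that is when $V\leq\delta'_{n,\kappa}$. The problem is that your proposed fallback — bypassing Corollary~\ref{coro:ball} and tuning constants in a direct case analysis via~\eqref{eq:H>r^n} and~\eqref{eq:H>e^r} — cannot succeed, because in the regime $V>\delta'_{n,\kappa}$ the statement itself is false, and this is almost certainly why the corollary was excised from the published version of the paper.

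To see the failure concretely, let $X_0$ be a closed hyperbolic $n$-manifold (which satisfies every hypothesis of the corollary with $\phi=\id$), and let $X=aX_0$ be the metric rescaled by some factor $a>1$. Then $\tilde{X}=a\,\hyp^n$, and for every $\tilde{x}_0\in\tilde{X}$ and $r\geq 0$,
\[
\HH_n\bigl(B_{\tilde{X}}(\tilde{x}_0,r)\bigr)=a^{n}\,\vol_n\bigl(B_{\hyp^n}(r/a)\bigr),
\]
while $\HH_n(X)^{1/n}=a\,\vol(X_0)^{1/n}$. The claimed inequality would require
\[
a^{n}\,\vol_n\bigl(B_{\hyp^n}(r/a)\bigr)\geq\vol_n\bigl(B_{\hyp^n}(r)\bigr)\qquad\text{for all }r\geq a\,\vol(X_0)^{1/n},
\]
but the left side grows like $e^{(n-1)r/a}$ while the right side grows like $e^{(n-1)r}$, so the ratio tends to zero as $r\to\infty$ for any fixed $a>1$. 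No choice of $\tilde{x}_0$ helps since $\hyp^n$ is homogeneous. This is not a quantitative gap that constant-tuning can close: the exponential rate of the lower bound in~\eqref{eq:H>e^r} is $\frac{\log 3}{4\kappa\mu_0}$ and drops below $n-1$ as soon as $\mu_0$ exceeds $\frac{\log 3}{4(n-1)\kappa}$, which happens precisely once $V$ exceeds a dimensional constant; the same mechanism is visible in the counterexample. Your instinct that ``the monotonicity then runs the wrong way'' when $\lambda<1$ was exactly the right alarm bell, but the conclusion to draw from it is that the threshold $\HH_n(X)^{1/n}$ cannot replace the smallness hypothesis $\HH_n(X)\leq\delta'_{n,\kappa}$ of Corollary~\ref{coro:ball}, rather than that a more careful bookkeeping of constants is needed. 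Indeed the author's own draft proof reaches the line ``$r\geq\HH_n(X)^{1/n}\geq a_n\sup\mu_\phi$,'' which would require estimating $\sup\mu_\phi$, whereas~\eqref{eq:m0} only controls $\mu_\phi(x_0)$ at the distinguished point produced by Theorem~\ref{theo:x_0}; that gap appears to be why the statement was ultimately withdrawn.
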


\begin{proof}
We argue as in the proof of Corollary~\ref{theo:ball}.
%Take $x_1 \in X$ such that $\mu_\phi(x_1) = \max \mu_\phi$.
By Theorem~\ref{theo:ball}, we have $r \geq \HH_n(X)^{\frac{1}{n}} \geq a_n \, \sup \mu_\phi$ for $a_n = \frac{1}{2 \sqrt[n]{c_n}}$; see~\eqref{eq:m0}.
%Thus, $r \geq a_n, \min \{1, \frac{1}{\sup \mu_\phi} \}$ for every $r \geq \HH_n(X)^{\frac{1}{n}}$.
In particular, $\min \{1,\frac{1}{\sup \mu_\phi} \} \, r$ is bounded away from zero.
Choose~$\lambda=\lambda_{n,\kappa} >0$ large enough such that 
\[
\vol_n(B_{\hyp^n}(C_{n,\kappa} \, {\textstyle \min \{1,\frac{1}{\sup \mu_\phi} \}} \, \lambda r)) \geq \lambda^n \, \vol_n(B_{\hyp^n}(r))
\]
for every $r \geq \HH_n(X)^{\frac{1}{n}}$.
Denote by $\lambda X$ the compact $n$-polyhedral length space obtained by multiplying the metric on~$X$ by~$\lambda$.
By Theorem~\ref{theo:ball}, there exists $\tilde{x}_0 \in \widetilde{\lambda X} = \lambda \tilde{X}$ such that 
\[
\HH_n(B_{\lambda \tilde{X}}(\tilde{x}_0,t)) \geq \vol_n(B_{\hyp^n}(C_{n,\kappa} \, {\textstyle \min \{1,\frac{1}{\sup \mu_\phi} \}} \, t))
\]
for every $t \geq 0$.
Hence,
\[
\HH_n(B_{\tilde{X}}(\tilde{x}_0,r)) = \frac{1}{\lambda^n} \HH_n(B_{\lambda \widetilde{X}}(\tilde{x}_0,\lambda r)) \geq \frac{1}{\lambda^n} \vol_n(B_{\hyp^n}(C_{n,\kappa} \, {\textstyle \min \{1,\frac{1}{\sup \mu_\phi} \}} \, \lambda r)) \geq \vol_n(B_{\hyp^n}(r))
\]
for every $r \geq \HH_n(X)^{\frac{1}{n}}$.
\end{proof}
\forgotten

The second corollary is what occurs when we apply the previous result to a closed manifold admitting a metric with negative sectional curvature.

\begin{corollary} \label{coro:final}
Let $M$ be a closed $n$-manifold admitting a metric with sectional curvature lying between $-k^2$ and~$-1$ for some $k \geq 1$.
%Then the conclusions of Theorem~\ref{theo:ball} and Corollary~\ref{theo:ball} hold for $X=M$.
Then for every Riemannian metric on~$M$ with $\vol(M) \leq \delta'_{n,\kappa}$ for some constant $\delta'_{n,\kappa}>0$ depending only on~$n$, there exists $\tilde{x}_0 \in \tilde{M}$ such that 
\begin{equation} \label{eq:C-M}
\vol(B_{\tilde{M}}(\tilde{x}_0,r)) \geq \vol(B_{\hyp^n}(r))
\end{equation}
for every $r \geq 1$.
\end{corollary}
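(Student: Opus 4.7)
The plan is to apply Corollary~\ref{coro:ball} to $X = M$ equipped with the given Riemannian metric, taking $\phi$ to be the identity homomorphism $\pi_1(M) \to \pi_1(M)$, so that $\mu_\phi = \mu$. Two hypotheses then need to be verified, and in both cases the relevant property will be extracted from the mere existence of the negatively curved background metric on $M$, without using that metric analytically.

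First, the image of~$\phi$, which is $\pi_1(M)$ itself, must satisfy the $\kappa$-Tits alternative for some $\kappa$. This is precisely the content of Example~\ref{ex}: since $M$ admits a Riemannian metric with sectional curvature pinched in $[-k^2,-1]$, the group $\pi_1(M)$ satisfies the $\kappa$-Tits alternative for some $\kappa = \kappa(n,k)$. Crucially, this is a group-theoretic property of $\pi_1(M)$ and hence is independent of which Riemannian metric is currently placed on $M$.

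Second, I need to show that $(M,g)$ does not $\tfrac{1}{2}\mu$-collapse, where $g$ denotes the arbitrary small-volume metric. For this I would invoke Proposition~\ref{prop:simpl}, whose hypothesis is that $M$ has nonzero simplicial volume. This in turn follows from the classical Gromov--Thurston theorem: a closed manifold admitting a metric of strictly negative sectional curvature has nonzero simplicial volume. Since simplicial volume is a topological invariant of~$M$, the non-collapsing conclusion is again independent of the metric~$g$.

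With both hypotheses verified, the second part of Corollary~\ref{coro:ball} applied with $\phi = \id$ yields a constant $\delta'_{n,\kappa}>0$, depending on $n$ and $\kappa$ and hence ultimately only on $n$ and $k$, such that if $\vol(M,g) \leq \delta'_{n,\kappa}$ then there exists $\tilde{x}_0 \in \tilde{M}$ satisfying $\vol(B_{\tilde{M}}(\tilde{x}_0,r)) \geq \vol(B_{\hyp^n}(r))$ for every $r \geq 1$. There is no serious obstacle in this deduction: the heavy lifting has already been carried out in Corollary~\ref{coro:ball} and the supporting results of Sections~\ref{sec:collapse} and~\ref{sec:margulis}. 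The only conceptual point worth emphasizing is the clean separation between the topological role of the background negatively curved metric---used only to extract the Tits alternative for $\pi_1(M)$ and the nonvanishing of $\|M\|$---and the geometric role of the arbitrary small-volume metric~$g$, to which the collapsing machinery is then applied.
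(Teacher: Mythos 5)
Your proof is correct and follows essentially the same approach as the paper: invoke Example~\ref{ex} for the $\kappa$-Tits alternative of $\pi_1(M)$, deduce non-$\frac{1}{2}\mu$-collapsing from Gromov's nonvanishing of the simplicial volume combined with Proposition~\ref{prop:simpl}, and then apply Corollary~\ref{coro:ball} with $\phi=\id$. The emphasis you place on the topological (metric-independent) nature of both hypotheses is exactly the point the paper makes implicitly.
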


\begin{proof}
As explained in Example~\ref{ex}, the fundamental group of~$M$ satisfies the $\kappa$-Tits alternative for some constant $\kappa=\kappa(n,k)$ depending only on~$n$ and~$k$; see~\cite{DKL19}.
Since $M$ is a closed manifold admitting a metric of negative curvature, its simplicial volume is nonzero; see~\cite{gro82}.
By Proposition~\ref{prop:simpl}, this implies that $M$ does not $\frac{1}{2} \mu$-collapse.
Therefore, the conclusion of Corollary~\ref{coro:ball} holds for~$M$.
\end{proof}

\end{document}